\newtheorem{theorem}{Theorem}[section]
\newtheorem{lemma}[theorem]{Lemma}
\theoremstyle{definition}
\theoremstyle{remark}
\numberwithin{equation}{section}
\newcommand{\mmod}[1]{\,\,(\text{mod}\,\,#1)}
\def\bfa{{\mathbf a}}
\def\bfb{{\mathbf b}}
\def\bfx{{\mathbf x}}
\def\calA{{\mathcal A}}  
\def\calB{{\mathcal B}}  \def\calBhat{{\widehat \calB}}
\def\calN{{\mathcal N}}
\def\calS{{\mathcal S}}
\def\calZ{{\mathcal Z}}
\def\dbN{{\mathbb N}}
\def\dbR{{\mathbb R}}
\def\dbZ{{\mathbb Z}}\def\dbQ{{\mathbb Q}}
\def\grA{{\mathfrak A}}
\def\grB{{\mathfrak B}}
\def\gre{{\mathfrak e}}\def\grE{{\mathfrak E}}
\def\grJ{{\mathfrak J}}
\def\grm{{\mathfrak m}}\def\grM{{\mathfrak M}}\def\grN{{\mathfrak N}}
\def\grn{{\mathfrak n}}\def\grS{{\mathfrak S}}
\def\grW{{\mathfrak W}}\def\grB{{\mathfrak B}}
\def\grW{{\mathfrak W}}
\def\alp{{\alpha}} 
\def\bet{{\beta}}  
\def\gam{{\gamma}} 
\def\del{{\delta}} \def\Del{{\Delta}}
\def\zet{{\zeta}}  
\def\tet{{\theta}} \def\bftet{{\boldsymbol \theta}} \def\Tet{{\Theta}}
\def\kap{{\kappa}}
\def\lam{{\lambda}} \def\Lam{{\Lambda}} 
\def\bfxi{{\boldsymbol \xi}}
\def\sig{{\sigma}}
\def\ome{{\omega}}  
\def\d{{\partial}}
\def\eps{\varepsilon}
\def\le{\leqslant} \def\ge{\geqslant}
\def\d{{\,{\rm d}}}
\begin{document}
\title[Diagonal quartic forms]{Cubic moments of Fourier coefficients\\ and Pairs of diagonal quartic forms}
\author[J\"org Br\"udern]{J\"org Br\"udern}
\address{Mathematisches Institut, Bunsenstrasse 3--5, D-37073 G\"ottingen, Germany}
\email{bruedern@uni-math.gwdg.de}
\author[Trevor D. Wooley]{Trevor D. Wooley}
\address{School of Mathematics, University of Bristol, University Walk, Clifton, Bristol BS8 1TW, United Kingdom}
\email{matdw@bristol.ac.uk}
\subjclass[2010]{11D72, 11P55, 11E76}
\keywords{Quartic Diophantine equations, Hardy-Littlewood method.}
\thanks{The authors are grateful to the Hausdorff Research Institute for Mathematics in Bonn and the Heilbronn Institute for Mathematics Research in Bristol for excellent working conditions that made the writing of this paper feasible.}
\date{}

\begin{abstract} We establish the non-singular Hasse Principle for pairs of diagonal quartic equations in
 $22$ or more variables. Our methods involve the estimation of a certain entangled two-dimensional
 $21^{\rm{st}}$ moment of quartic smooth Weyl sums via a novel cubic moment of Fourier coefficients.
\end{abstract}
\maketitle

\section{Introduction} A consideration of disjoint systems of diagonal Diophantine equations lends
 credibility to the guiding principle that the number of variables required to solve a system should in general
 grow in proportion to the number of its equations. That such systems are no harder to analyse is made
 evident in an elegant paper of Cook \cite{Coo1972}. There is remarkably little work available in which
 systems of entangled equations have been successfully analysed in circumstances where the average
 number of variables per equation is smaller than that accessible for a single equation. These investigations
 have been limited almost exclusively to equations of degree at most three, and exploit the interaction 
between component equations by means of restricted moments of Fourier coefficients of unconventional
 type (see \cite{BW2007, BW2011, BW2013}). In this paper we add to this small stock of examples,
 analysing pairs of diagonal quartic forms through a novel cubic moment of certain Fourier coefficients of
 arithmetic origin.\par

In order to describe our results, we introduce some notation. When $s$ is a natural number, let $a_j,b_j$
 $(1\le j\le s)$ be fixed rational integers, and consider the pair of simultaneous diagonal quartic equations
\begin{equation}\label{1.1}
a_1x_1^4+a_2x_2^4+\ldots +a_sx_s^4=b_1x_1^4+b_2x_2^4+\ldots +b_sx_s^4=0.
\end{equation}
Given a positive number $P$, we denote by $\calN(P)$ the number of integral solutions $\bfx$ of
 (\ref{1.1}) with $|x_j|\le P$ $(1\le j\le s)$.

\begin{theorem}\label{theorem1.1}
Let $s$ be a natural number with $s\ge 22$. Suppose that $a_i,b_i\in \dbZ$ $(1\le i\le s)$ satisfy the condition that for any pair $(c,d)\in \dbZ^2\setminus \{(0,0)\}$, at least $s-7$ of the numbers $ca_j+db_j$ $(1\le j\le s)$ are non-zero. Then provided that the system (\ref{1.1}) has non-singular real and $p$-adic solutions for each prime number $p$, one has $\calN(P)\gg P^{s-8}$.
\end{theorem}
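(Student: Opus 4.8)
The proof proceeds by the Hardy–Littlewood circle method applied to the generating function built from quartic smooth Weyl sums. First I would introduce, for a suitable smoothness parameter $\eta>0$, the exponential sum
\[
f(\alpha;P,R)=\sum_{\substack{P/2<x\le P\\ p\mid x\implies p\le P^\eta}} e(\alpha x^4),
\]
and write $\calN(P)\ge \calN^*(P)$, where $\calN^*(P)$ counts the solutions of \eqref{1.1} restricted to $R$-smooth variables via the integral
\[
\calN^*(P)=\int_{[0,1)^2}\prod_{j=1}^s f(a_j\alpha+b_j\bet;P,R)\,\d\alpha\,\d\bet.
\]
A standard pruning argument (following \cite{BW2007,BW2011,BW2013}) reduces matters to showing that the major arcs contribute $\gg P^{s-8}$ — which follows from the presumed non-singular local solubility together with a convergent singular series and singular integral, once the condition on the $ca_j+db_j$ guarantees that every relevant one-dimensional restriction sees enough nonzero coefficients — and that the minor arcs contribute $o(P^{s-8})$.

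**The core estimate.** The heart of the matter is the minor-arc bound, and here the novelty signposted in the abstract enters. After splitting off a handful of variables to be treated by classical Weyl-type estimates, one is left to bound a two-dimensional moment of the shape
\[
\int_{[0,1)^2}\prod_{j}\bigl|f(a_j\alpha+b_j\bet;P,R)\bigr|\,\d\alpha\,\d\bet
\]
over roughly $21$ of the variables. The plan is to dissect the unit square according to the rational approximations to $\alpha$, $\bet$, and the "entangled" linear form $\alpha/\bet$ (or the slopes $a_j\alpha+b_j\bet$ relative to one another), and on each piece to estimate the integrand by interpreting the relevant portion as counting solutions to an auxiliary Diophantine system whose solutions are governed by the Fourier coefficients $\lambda(n)$ of a quartic theta-type object. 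The cubic moment $\sum_{n\le N}|\lambda(n)|^3$ — established earlier in the paper — is exactly what is needed to control a triple of entangled sums simultaneously, replacing what a naive mean-value argument would handle only with far more variables. One then combines this cubic moment with mean-value estimates for quartic smooth Weyl sums of Vinogradov–Wooley type (to absorb the remaining variables) and with a pointwise Weyl bound on the entangled arc to beat $P^{s-8}$ with $s=22$.

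**The main obstacle.** The principal difficulty is precisely the entanglement: the arguments $a_j\alpha+b_j\bet$ are linearly dependent over $\dbZ$, so the two-dimensional integral does not factor as a product of one-dimensional integrals, and a direct appeal to Hölder's inequality followed by known single-equation moments is wasteful by several variables. Overcoming this requires the delicate combinatorial analysis of how many of the forms $ca_j+db_j$ can vanish — this is where the hypothesis "at least $s-7$ are non-zero" is used, ensuring that on each arc configuration one retains a full-strength set of variables — together with the extraction of the cubic moment of Fourier coefficients on the genuinely two-dimensional (entangled) arcs, where one must be careful that the rational approximations to $\alpha$ and $\bet$ interact to produce an auxiliary congruence condition whose count is governed by $|\lambda(\cdot)|^3$ rather than a larger power. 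Once that cubic moment is in hand, the remaining estimates are routine applications of the smooth-number machinery, and the lower bound $\calN(P)\gg P^{s-8}$ follows by the standard major-arc/minor-arc dichotomy.
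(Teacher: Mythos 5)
Your plan correctly identifies the outer scaffolding — circle method, a two-dimensional Hardy–Littlewood dissection, the rank condition on $ca_j+db_j$ controlling multiplicities, and a cubic moment of Fourier coefficients as the engine driving the saving down to $s=22$. However, your account of \emph{how} the cubic moment arises and is estimated does not match the actual mechanism, and the version you describe would not get off the ground.

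You propose to ``dissect the unit square according to the rational approximations to $\alpha$, $\beta$, and the entangled linear form,'' interpret the integrand piecewise as a Diophantine count, and extract a cubic moment of Fourier coefficients $\lambda(n)$ of a ``quartic theta-type object'' governed by ``an auxiliary congruence condition.'' None of this happens. The key reduction is a single Fourier-analytic identity with no arc dissection at all: one writes $\psi(n)=\int_0^1|h(\alpha)|^7e(-n\alpha)\,\d\alpha$, expands each of $|h(a\alpha)|^7$, $|h(b\beta)|^7$, $|h(c\alpha+d\beta)|^7$ in its Fourier series, and observes that orthogonality in $\int_0^1\int_0^1$ collapses the triple sum to $\sum_{n}\psi_a(cn)\psi_b(dn)\psi(n)$, whence $I(a,b,c,d)\le\sum_n|\psi(n)|^3$ by H\"older. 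Crucially, $\psi(n)$ is the Fourier coefficient of $|h(\alpha)|^7$, which is a modulus raised to an odd power and has \emph{no} arithmetic interpretation — the paper stresses this explicitly, and it is precisely why the Lemma 3.1 decay estimate $\psi(n)\ll P^{15}/n^2$ (via two integrations by parts) and the truncation argument of Lemma 3.2 are needed. Your ``theta-type object'' picture would give $\psi(n)$ arithmetic meaning it does not have.

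The second missing ingredient is the estimation of $\sum_n|\psi(n)|^3$ itself. You treat it as a black box ``established earlier in the paper,'' but the proof of Theorem 1.1 depends on supplying it, and it is not obtained by counting solutions to a Diophantine system. Rather, one performs a dyadic decomposition on $|\psi(n)|$, and for each level $T$ bounds the cardinality $Z$ of the exceptional set $\mathcal{Z}=\{n:T<|\psi(n)|\le 2T\}$ via the dual sum $K(\alpha)=\sum_{n\in\mathcal{Z}}\omega_ne(-n\alpha)$, the inequality $TZ<\int_0^1|h(\alpha)|^7K(\alpha)\,\d\alpha$, H\"older's inequality, and — decisively — the large-values estimate $\int_0^1|h(\alpha)^4K(\alpha)^2|\,\d\alpha\ll P^3Z+P^{2+\eps}Z^{3/2}$ from Kawada–Wooley. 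Without this large-values input there is no way to beat the H\"older benchmark $P^{13+\frac{3}{2}\Delta_{10}}$, and the whole 22-variable result collapses back to 24. So the gap is concrete: you have named the right object (a cubic moment of Fourier coefficients) but not the two ideas that make it usable — the orthogonality identity relating $I(a,b,c,d)$ to $\sum|\psi(n)|^3$, and the large-values bound on the frequency of large $|\psi(n)|$.
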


We pause at this point to discuss the various hypotheses of this theorem. First, the application of
 conventional technology (see \cite{BC1992,Vau1989}) has the potential to establish a conclusion analogous
 to Theorem \ref{theorem1.1} only for $s\ge 24$. Whilst our conclusion saves two variables over this
 bound, one may safely conjecture that subject to appropriate rank conditions the constraint $s\ge 17$
 should suffice. When $s\ge 21$, it follows from work of Godinho \cite{God1992} that the system
 (\ref{1.1}) has non-zero $p$-adic solutions whenever $p>73$. Theorem \ref{theorem1.1} consequently
 implies a Hasse principle for pairs of equations of the shape (\ref{1.1}) in a rather strong form. By
 combining Godinho's work with that of Poehler \cite{Poe2007}, meanwhile, one finds that the $p$-adic
 solubility of the system (\ref{1.1}) is assured for $s\ge 49$.\par

The most novel feature of our proof of Theorem \ref{theorem1.1} involves a consideration of a suitable 
cubic moment of certain Fourier coefficients. For a continuous function $H:\dbR\rightarrow [0,\infty)$ of
 period $1$, let $c(n)=\int_0^1H(\alp)e(-n\alp)\d\alp$, where as usual we write $e(z)$ for $e^{2\pi iz}$. It
 transpires that one may relate the moment
\begin{equation}\label{1.2}
\sum_{n\in \dbZ}|c(n)|^3
\end{equation}
to the correlation
\begin{equation}\label{1.3}
\int_0^1\int_0^1H(\alp)H(\bet)H(\alp+\bet)\d\alp\d\bet .
\end{equation}
We shall take $H(\alp)=\Bigl| \sum_{x\in \calA}e(\alp x^4)\Bigr|^7$, in which $\calA$ is a suitable set of
 smooth numbers. In this case, one may control the moment (\ref{1.2}) by means of the large values
 estimates for Fourier coefficients provided in \cite{KW2010}. The mean value (\ref{1.3}) may be viewed as
 an entangled $21^{\rm st}$-moment of smooth Weyl sums, and it is this that plays the leading role in our
 application of the Hardy-Littlewood (circle) method. We remark that with $H(\alp)$ defined as above, the
 Fourier coefficient $c(n)$ has no direct arithmetic interpretation, despite its arithmetic origin. Indeed, this
 coefficient may be non-zero for arbitrarily large $n$, a complication reflected in technical difficulties that we
 encounter when relating (\ref{1.2}) to (\ref{1.3}) in \S3. Although in principle our methods are applicable
 in wide generality, it would appear that in light of currently available mean value estimates for Weyl sums,
 new ideas are required for their application to Diophantine problems of higher degree.\par

Following some preliminary discussion of smooth Weyl sums, we announce the key $21^{\rm st}$-moment
 estimate in \S2. The aforementioned cubic moment of Fourier coefficients is analysed in \S3, and then
 estimated by means of large values estimates in \S4. In \S5 we shift our focus to preparations for the
 application of the circle method, tackling the minor arcs in the Hardy-Littlewood dissection in \S6, and 
concluding with the major arc analysis in \S7.\par

Our basic parameter is $P$, a sufficiently large positive number. In this paper, implicit constants in
 Vinogradov's notation $\ll$ and $\gg$ may depend on $s$ and $\eps$, as well as ambient coefficients such
 as $\bfa$ and $\bfb$. Whenever $\eps$ appears in a statement, either implicitly or explicitly, we assert that
 the statement holds for each $\eps>0$.

\section{A $21^{\rm st}$-moment of biquadratic Weyl sums} In this section we introduce the novel
 ingredients utilised in our application of the circle method. Before announcing these unconventional mean
 value estimates, however, we must introduce some notation. When $P$ and $R$ are real numbers with 
$1\le R\le P$, we define the set of smooth numbers $\calA(P,R)$ by
$$\calA(P,R)=\{ n\in \dbZ\cap[1,P]:\text{$p$ prime and $p|n\Rightarrow p\le R$}\}.$$
We then define the smooth Weyl sum $h(\alp)=h(\alp;P,R)$ by
$$h(\alp;P,R)=\sum_{x\in \calA(P,R)}e(\alp x^4).$$
It is convenient to refer to an exponent $\Del_t$ as {\it admissible} if there exists a positive number $\eta$
 such that, whenever $1\le R\le P^\eta$, one has
\begin{equation}\label{2.1}
\int_0^1|h(\alp;P,R)|^t\d\alp\ll P^{t-4+\Del_t}.
\end{equation}

\begin{lemma}\label{lemma2.1} The exponents $\Del_7=0.849408$, $\Del_{10}=0.213431$ and
 $\Del_{12}=0$ are admissible.
\end{lemma}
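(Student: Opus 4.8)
The plan is to establish each of the three admissible exponents $\Del_7$, $\Del_{10}$, $\Del_{12}$ by combining known mean value estimates for quartic smooth Weyl sums with the standard interpolation machinery. The exponent $\Del_{12}=0$ is the most robust and should be quoted directly: the bound $\int_0^1|h(\alp;P,R)|^{12}\d\alp\ll P^{8+\eps}$ for smooth Weyl sums of degree $4$ follows from the now-classical efficient differencing technology of Vaughan and Wooley (and is subsumed by the subsequent work exploiting Vinogradov-type mean value theorems for smooth sequences); in the terminology of \eqref{2.1} this is precisely the assertion that $\Del_{12}$ is admissible. The case $t=12$ thus requires no new argument beyond citation.

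For the remaining two exponents I would proceed by the convexity principle for moments. Writing $I_t$ for the integral in \eqref{2.1}, H\"older's inequality gives, for $7\le t\le 12$,
\begin{equation*}
I_t\le I_7^{(12-t)/5}I_{12}^{(t-7)/5},
\end{equation*}
so that any admissible pair $(\Del_7,\Del_{12})$ yields the interpolated admissible exponent $\Del_t=\tfrac{12-t}{5}\Del_7$ at the intermediate value $t$; in particular $\Del_{10}=\tfrac{2}{5}\Del_7$. With the claimed value $\Del_7=0.849408$ this gives $\Del_{10}\le 0.3397\ldots$, which is weaker than the asserted $0.213431$. Hence the value of $\Del_{10}$ recorded here must come not from interpolation against $\Del_7$ but from a direct estimate: one feeds the sharpest available tenth-moment bound for quartic smooth Weyl sums — obtained via repeated efficient differencing, or via the smooth-sequence form of the Vinogradov mean value theorem together with the relevant descent — into the definition \eqref{2.1}. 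The numerical constant $0.213431$ is the output of that optimization, and I would simply verify it against the tables in the cited literature rather than re-deriving the differencing recursion.

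The genuinely delicate point is the seventh moment. Since $7$ is below the range where the trivial or near-optimal bounds apply, the exponent $\Del_7$ is extracted by a short interpolation argument feeding off a lower even moment and a higher one. Concretely, one writes $7$ as a convex combination of $6$ (or of $2$, using $I_2\ll P^{2+\eps}$ trivially, or $I_4\ll P^{1+\eps}$ by Hua-type considerations for the fourth powers) and of $12$ or $14$, and then optimizes. For instance, if $\Del_\sigma$ is admissible for some $\sigma<7$ and $\Del_\tau$ is admissible for some $\tau>7$, H\"older yields $\Del_7\le \lambda\Del_\sigma+(1-\lambda)\Del_\tau$ with $\lambda=(\tau-7)/(\tau-\sigma)$. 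The stated value $0.849408$ is then the result of choosing the best available $(\sigma,\tau)$ and inserting the corresponding known admissible exponents; the main obstacle is purely bookkeeping — tracking which mean value estimates for quartic smooth Weyl sums are currently on record and which combination minimizes the resulting exponent. Once the optimal configuration is identified, each of the three displayed values follows by a one-line application of H\"older's inequality to the appropriate known bounds, and the common admissible parameter $\eta$ is taken to be the minimum of the $\eta$'s furnished by those input estimates.
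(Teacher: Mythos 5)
Your proposal is closer to a plan of attack than to a proof, and the central difficulty --- obtaining the two decimal exponents --- is acknowledged but never resolved. In the paper, Lemma~\ref{lemma2.1} is established by pure citation: $\Del_7=0.849408$ and $\Del_{10}=0.213431$ are read directly from Theorem~2 and the accompanying table of admissible exponents on p.~393 of Br\"udern--Wooley \cite{BW2000}, while $\Del_{12}=0$ traces, through the proof of \cite[Lemma~2.3]{BW2000}, back to Vaughan \cite[Lemma~5.2]{Vau1989}. You correctly identify $\Del_{12}=0$ as a classical citation, and your convexity computation showing that H\"older interpolation between $\Del_7$ and $\Del_{12}$ yields only $\Del_{10}\le\tfrac25\Del_7\approx 0.34$ is a useful sanity check that rightly rules out interpolation as a route to the tenth-moment figure.

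The gap is that you never actually produce the values $0.849408$ and $0.213431$: for $\Del_{10}$ you defer to unnamed ``tables in the cited literature,'' and for $\Del_7$ you speculate that the exponent ``is extracted by a short interpolation argument feeding off a lower even moment and a higher one,'' concluding that ``the main obstacle is purely bookkeeping.'' That description does not match how these exponents are obtained. They emerge from the efficient-differencing iteration carried out in \cite{BW2000} --- a paper whose title already concerns ``seven biquadrates,'' so the seventh moment of the quartic smooth Weyl sum is a primary object there, tabulated directly rather than interpolated --- and the specific decimals are outputs of that recursion. Your own computation for $\Del_{10}$ already shows that na\"{\i}ve convexity between nearby moments cannot reproduce the sharp figures, which should have cast doubt on the same strategy for $\Del_7$. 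Since neither the recursion nor the correct reference is supplied, the two numerical assertions of the lemma remain unverified in your proposal.
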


\begin{proof}The desired conclusion follows from \cite[Theorem 2]{BW2000} and the discussion
 surrounding the table of exponents on \cite[page 393]{BW2000}. As explained in the proof of 
\cite[Lemma 2.3]{BW2000}, the upper bound (\ref{2.1}) when $t=12$ is a consequence of 
\cite[Lemma 5.2]{Vau1989}.
\end{proof}

Henceforth, we fix $R=P^\eta$ with $\eta>0$ chosen in accordance with Lemma \ref{lemma2.1} and the 
upper bounds (\ref{2.1}). When $a,b,c,d\in \dbZ$, we define the integral
\begin{equation}\label{2.2}
I(a,b,c,d)=\int_0^1\int_0^1|h(a\alp)h(b\bet)h(c\alp+d\bet)|^7\d\alp\d\bet .
\end{equation}
Our goal in \S\S3 and 4 is the proof of the following upper bound for this integral.

\begin{theorem}\label{theorem2.2} Suppose that $a,b,c,d$ are non-zero integers. Then one has
$$I(a,b,c,d)\ll P^{13+\frac{1}{2}\Del_{10}+\eps}.$$
\end{theorem}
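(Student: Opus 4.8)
First I would note that since $a,b,c,d$ are fixed non-zero integers, a linear substitution in $\alpha$ and $\beta$ together with the periodicity of $h$ reduces $I(a,b,c,d)$ to a bounded number of integrals of the same essential shape, so it suffices to bound
\[
I=\int_0^1\int_0^1|h(\alpha)h(\beta)h(\alpha+\beta)|^7\,\d\alpha\,\d\beta.
\]
The plan is to expose the entanglement by writing $H(\alpha)=|h(\alpha)|^7$, so that $I=\int_0^1\int_0^1 H(\alpha)H(\beta)H(\alpha+\beta)\,\d\alpha\,\d\beta$ is exactly the correlation (\ref{1.3}). The first substantive step is to relate this to the cubic moment $\sum_n |c(n)|^3$ of the Fourier coefficients $c(n)=\int_0^1 H(\alpha)e(-n\alpha)\,\d\alpha$, as advertised in the introduction; modulo the technical issue that $c(n)$ does not vanish for large $n$ (so the Fourier series of $H$ is genuinely infinite and one must control tails carefully), one expects an identity or near-identity of the form $I=\sum_{n\in\dbZ}|c(n)|^3$, since $H$ is real and non-negative. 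This is the content promised for \S3.

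**Estimating the cubic moment by large values.** Having passed to $\sum_n |c(n)|^3$, the second step is to estimate this sum using the large values machinery for Fourier coefficients from \cite{KW2010}. One splits the range of $n$ dyadically according to the size of $|c(n)|$, and for each threshold $T$ bounds the number of $n$ with $|c(n)|>T$. Here $c(n)=\int_0^1 |h(\alpha)|^7 e(-n\alpha)\,\d\alpha$ is a Fourier coefficient of the $7$th power of a smooth Weyl sum, so the relevant large values estimate should count, weighted suitably, the solutions of an underlying quartic equation, and one feeds in the admissible exponents $\Delta_7$, $\Delta_{10}$, $\Delta_{12}$ of Lemma \ref{lemma2.1}. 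A trivial bound gives $|c(n)|\le \int_0^1 |h(\alpha)|^7\,\d\alpha \ll P^{3+\Delta_7+\eps}$, and Parseval combined with the $14$th-moment bound controls $\sum_n |c(n)|^2$; interpolating the cubic moment between an $\ell^\infty$ bound and this $\ell^2$ bound, and sharpening the count of large coefficients via \cite{KW2010}, should be engineered to produce the target exponent $13+\tfrac12\Delta_{10}+\eps$. The appearance of $\Delta_{10}$ (rather than $\Delta_7$ or $\Delta_{14}$) in the final bound strongly suggests that the decisive estimate involves pairing a $7$th power against a $3$rd power to build a $10$th moment somewhere in the large-values argument.

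**The main obstacle.** I expect the hard part to be the passage from the correlation (\ref{1.3}) to the honest cubic moment (\ref{1.2}) in \S3 — precisely the point the authors flag, namely that $c(n)$ can be non-negligible for arbitrarily large $n$ because $|h(\alpha)|^7$ is not a trigonometric polynomial. One must therefore either truncate the Fourier expansion and estimate the tail $\sum_{|n|>N}|c(n)|^3$ for a suitable $N=N(P)$ (using decay of $c(n)$ coming from the oscillation of $h$, e.g. via repeated integration by parts or van der Corput-type estimates on the phase $\alpha x^4$), or work with a smoothed/windowed version of $H$ and control the error. Getting this tail genuinely under the level $P^{13+\frac12\Delta_{10}}$, rather than losing a power of $P$, is the technical crux; everything else is a dyadic-dissection-plus-interpolation argument built on Lemma \ref{lemma2.1} and the large values estimates of \cite{KW2010}. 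Once \S3 delivers a clean statement of the form $I\ll \sum_{|n|\le N}|c(n)|^3 + (\text{acceptable error})$, the \S4 estimate of the truncated cubic moment should follow by the standard weighted large-values calculus.
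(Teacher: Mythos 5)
Your sketch does identify the right global strategy---pass from the correlation to a cubic moment of Fourier coefficients, control the tail because $|h|^7$ is not a trigonometric polynomial, then run a dyadic large--values argument over $|\psi(n)|$ anchored to \cite{KW2010}---so you have the shape of the argument. But two points in the proposal are genuinely problematic.

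First, your opening reduction is false as stated. The substitution $\alpha\mapsto\alpha/a$, $\beta\mapsto\beta/b$ (together with periodicity of $h$) replaces $h(c\alpha+d\beta)$ by $h\bigl(\tfrac{c}{a}\alpha+\tfrac{d}{b}\beta\bigr)$, whose coefficients are no longer integers, so one does not arrive at $\int\!\!\int|h(\alpha)h(\beta)h(\alpha+\beta)|^7\,\d\alpha\,\d\beta$ or at a bounded family of unit--coefficient integrals. The paper never performs such a reduction. Instead, it expands each factor $|h(l\theta)|^7=\sum_m\psi_l(m)e(m\theta)$ with $\psi_l(m)=\psi(m/l)$ when $l\mid m$ and $0$ otherwise, uses orthogonality in the double integral to obtain $I(a,b,c,d)=\sum_n\psi_a(cn)\psi_b(dn)\psi(n)$, and then applies H\"older together with the observation $\sum_n|\psi_a(cn)|^3\le\sum_k|\psi(k)|^3$ to conclude $I(a,b,c,d)\le\sum_n|\psi(n)|^3$. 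The coefficients $a,b,c,d$ disappear for free at this step; no change of variables on the torus is needed and, indeed, none would work. Relatedly, what you call a ``near--identity $I=\sum_n|c(n)|^3$'' is actually a one--sided bound: even with unit coefficients the orthogonality identity gives $\sum_n\psi(n)^3$, and $\psi(n)$ changes sign.

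Second, the large--values phase is too vague to deliver the exponent $13+\tfrac12\Del_{10}$, and the one concrete idea you offer would fall short. Interpolating $\sum|\psi(n)|^3$ between $\sup|\psi(n)|\ll P^{3+\Del_7}$ and Parseval for $\sum|\psi(n)|^2=\int|h|^{14}$ (bounded, say, by $P^4\int|h|^{10}\ll P^{10+\Del_{10}}$) gives roughly $P^{13.85+\Del_{10}}$, which is nearly a full power worse than the target. The paper's actual device is to form, for each dyadic level $T$, the sign exponential sum $K(\alpha)=\sum_{n\in\calZ}\ome_ne(-n\alpha)$ over the $Z$ exceptional $n$, note $\int_0^1|h|^7K\,\d\alpha>TZ$, and then combine the \cite{KW2010} bound $\int_0^1|h^4K^2|\,\d\alpha\ll P^3Z+P^{2+\eps}Z^{3/2}$ with two distinct H\"older splittings: one using $(|h|^4K^2)^{1/3}$, $(K^2)^{1/6}$, $(|h|^{10})^{1/6}$, $(|h|^{12})^{1/3}$ and one using $(|h|^4K^2)^{1/2}$, $(|h|^{10})^{1/2}$. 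The two resulting bounds for $Z$ in terms of $T$ are then played off against each other over the ranges $T\le P^{3+\frac32\Del_{10}}$ and $T>P^{3+\frac32\Del_{10}}$. This specific interpolation, not a generic $\ell^\infty$--$\ell^2$ argument, is what produces $\Del_{10}$ with the coefficient $\tfrac12$. Your proposal gestures at ``sharpening via \cite{KW2010}'' but supplies no mechanism, so as written it does not constitute a proof of the stated exponent.

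What you do get right, and concretely so, is the tail control: the paper's Lemma \ref{lemma3.1} does exactly what you predict, integrating by parts twice to obtain $\psi(n)\ll P^{15}/n^2$ (using that $|h|^7$ is twice continuously differentiable) and thereby truncating to $|n|\le P^9$ at the cost of an acceptable $O(P^9)$.
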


Previous authors would disentangle the mixed moment (\ref{2.2}) via H\"older's inequality to obtain an
 estimate of the shape
$$I(a,b,c,d)\ll \Bigl( \int_0^1|h(A\alp)|^{21/2}\d\alp\Bigr)^2\ll P^{13+\frac{3}{2}\Del_{10}}.$$
The superiority of our new estimate is self-evident. We direct the curious reader to the explanation following
 the statement of \cite[Theorem 3]{BW2007} for a related discussion. By following the argument of the
 proof of \cite[Theorem 4]{BW2007}, presented in \S4 of the latter source, one obtains from Theorem
 \ref{theorem2.2} a superficially more general conclusion of similar type.

\begin{theorem}\label{theorem2.3}
Suppose that $c_i,d_i$ $(1\le i\le 3)$ are integers satisfying
$$(c_1d_2-c_2d_1)(c_1d_3-c_3d_1)(c_2d_3-c_3d_2)\ne 0.$$
Write $\Lam_j=c_j\alp+d_j\bet$ $(j=1,2,3)$. Then whenever $1\le R\le P^\eta$, one has
$$\int_0^1\int_0^1|h(\Lam_1)h(\Lam_2)h(\Lam_3)|^7\d\alp \d\bet \ll 
P^{13+\frac{1}{2}\Del_{10}+\eps}.$$
\end{theorem}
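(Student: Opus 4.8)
The plan is to derive Theorem~\ref{theorem2.3} from Theorem~\ref{theorem2.2} by a linear change of variables in the $(\alp,\bet)$-torus, exactly along the lines of the passage from \cite[Theorem~2]{BW2007} to \cite[Theorem~4]{BW2007}. Write $\grL$ for the integer $2\times 2$ matrix with rows $(c_1,d_1)$ and $(c_2,d_2)$. The hypothesis $(c_1d_2-c_2d_1)\ne 0$ guarantees that $\grL$ is non-singular, so the substitution $(\alp,\bet)\mapsto (\Lam_1,\Lam_2)$ — that is, $\gam=c_1\alp+d_1\bet$, $\del=c_2\alp+d_2\bet$ — is a bijective linear map of $(\dbR/\dbZ)^2$ onto itself, up to a finite-to-one overlap governed by $D=|c_1d_2-c_2d_1|$. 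Under this substitution $\Lam_3=c_3\alp+d_3\bet$ becomes a linear form $a^*\gam+b^*\del$ in the new variables, with $a^*,b^*$ rational with denominator dividing $D$. Clearing denominators by scaling $\gam,\del$ and the forms through by $D$ and using periodicity, one arrives at an integral of precisely the shape $I(a,b,c,d)$ in \eqref{2.2}, with $a,b,c,d$ non-zero integers, at the cost of a bounded multiplicative constant $O_D(1)$ absorbed into the implied constant.

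The crux is verifying that the four resulting integer coefficients are all non-zero, since Theorem~\ref{theorem2.2} requires this. After the change of variables the integrand is $|h(D\gam)h(D\del)h(D(a^*\gam+b^*\del))|^7$ (schematically), and non-vanishing of the first two coefficients is automatic. For the third form, $a^*$ and $b^*$ are, up to sign and the factor $D$, the numbers $c_1d_3-c_3d_1$ and $c_2d_3-c_3d_2$ respectively — essentially by Cramer's rule expressing $\Lam_3$ in terms of $\Lam_1,\Lam_2$. The hypothesis $(c_1d_3-c_3d_1)(c_2d_3-c_3d_2)\ne 0$ is exactly what makes these two coefficients non-zero. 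Thus all three pairwise determinant conditions in the statement are used: two of them to ensure the mixed form genuinely involves both new variables, and the first to ensure invertibility of the change of variables. One then applies Theorem~\ref{theorem2.2} with the appropriate choice of $(a,b,c,d)$ to conclude the bound $P^{13+\frac12\Del_{10}+\eps}$, the exponent being unchanged since the substitution alters the value only by a bounded factor.

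The main technical nuisance — not a deep obstacle — is handling the fact that the linear change of variables on the torus is $D$-to-$1$ rather than exactly measure-preserving: one should either pass to the fundamental domain of the sublattice $\grL\dbZ^2$ and bound the number of sheets by $D$, or equivalently note that $h$ has period $1$ in its argument so that replacing $\gam$ by $\gam/D$ and rescaling the forms returns one to integrals over the unit square with the integrand enlarged by the substitution having Jacobian $D^{-2}$, which again contributes only $O_D(1)$. Since $D$ depends only on the fixed coefficients $c_i,d_i$, all such factors are permissible. A second routine point is that the value of $R$ is unaffected: the smoothness constraint defining $\calA(P,R)$ lives with the variables $x$, not with the dual variables $\gam,\del$, so the hypothesis $1\le R\le P^\eta$ carries over verbatim and Theorem~\ref{theorem2.2} applies with the same $\eta$. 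With these bookkeeping matters dispatched, the deduction is immediate and I would present it in a few lines, referring the reader to \S4 of \cite{BW2007} for the analogous details.
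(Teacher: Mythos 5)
Your proposal is correct and follows essentially the same route as the paper, which likewise proves Theorem~\ref{theorem2.3} by pointing to the linear change of variables in \cite[\S4]{BW2007} that reduces the general trilinear integral to an integral of the shape $I(a,b,c,d)$, to which Theorem~\ref{theorem2.2} then applies. One trivial bookkeeping slip: in the Cramer's rule step the coefficient of $\gam=\Lam_1$ in $\Lam_3$ is (up to sign and the factor $D$) $c_2d_3-c_3d_2$, and the coefficient of $\del=\Lam_2$ is $c_1d_3-c_3d_1$, the reverse of what you wrote, but this harmless transposition does not affect the argument since the hypothesis makes both quantities non-zero.
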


\section{Cubic moments of certain Fourier coefficients}
An inspection of the mean value (\ref{2.2}) suggests that Fourier coefficients associated with $|h(\alp)|^7$
 should play a prominent role in its estimation. The absence of any direct arithmetic interpretation forces us,
 however, to indulge in a far more detailed discussion of these Fourier coefficients than would ordinarily be
 the case. Define
\begin{equation}\label{3.1}
\psi(n)=\int_0^1|h(\alp)|^7e(-n\alp)\d\alp .
\end{equation}

\begin{lemma}\label{lemma3.1} The Fourier expansion
\begin{equation}\label{3.2}
|h(\alp)|^7=\sum_{n\in \dbZ}\psi(n)e(n\alp)
\end{equation}
is uniformly convergent in $\alp$, and one has $\psi(n)\ll P^{15}/n^2$.
\end{lemma}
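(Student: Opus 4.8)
The plan is to establish the two claims of Lemma~\ref{lemma3.1} --- uniform convergence of the Fourier series and the decay bound $\psi(n)\ll P^{15}/n^2$ --- by exploiting smoothness properties of $|h(\alp)|^7$ inherited from the structure of $h$. Since $|h(\alp)|^2 = h(\alp)\overline{h(\alp)} = \sum_{x,y\in\calA(P,R)} e(\alp(x^4-y^4))$ is a trigonometric polynomial with frequencies bounded by $P^4$, the function $|h(\alp)|^7 = |h(\alp)|^6 \cdot |h(\alp)|$ is problematic because of the odd power; the honest object to work with is $|h(\alp)|^7$ directly as a continuous (but not smooth at zeros of $h$) periodic function. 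The standard device for getting $n^{-2}$ decay is two integrations by parts, which requires two derivatives --- so the main point is to control $\frac{d^2}{d\alp^2}|h(\alp)|^7$ in $L^1$ or $L^\infty$ despite the non-differentiability at the (isolated) zeros of $h(\alp)$.

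First I would write $|h(\alp)|^7 = \bigl(|h(\alp)|^2\bigr)^{7/2} = g(\alp)^{7/2}$ where $g(\alp) = |h(\alp)|^2 = h(\alp)\overline{h(\alp)}$ is a genuine real-analytic trigonometric polynomial, non-negative, with $g(\alp)\ll P^2$ and $g'(\alp)\ll P^6$, $g''(\alp)\ll P^{10}$ (each derivative costing a factor $P^4$ from differentiating $e(\alp(x^4-y^4))$, with $O(P^2)$ terms). Then formally
\begin{equation*}
\frac{d}{d\alp}\,g^{7/2} = \tfrac72 g^{5/2} g', \qquad \frac{d^2}{d\alp^2}\,g^{7/2} = \tfrac72 g^{5/2} g'' + \tfrac{35}{4} g^{3/2} (g')^2.
\end{equation*}
Crucially, since $g\ge 0$, the powers $g^{5/2}$ and $g^{3/2}$ are bounded: $g^{5/2}\ll P^5$ and $g^{3/2}\ll P^3$. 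Hence, wherever $g>0$, one has $\bigl|\tfrac{d^2}{d\alp^2} g^{7/2}\bigr| \ll P^5\cdot P^{10} + P^3\cdot P^{12} = P^{15}$. The function $g^{7/2}$ is $C^1$ everywhere (the first derivative $\tfrac72 g^{5/2}g'$ extends continuously by zero at zeros of $g$, since $g^{5/2}\to 0$), and $C^2$ away from the finite set of zeros of $g$; at those zeros the second derivative has at worst a jump or a removable singularity but remains bounded by $O(P^{15})$ in the sense of the one-sided limits, so $(g^{7/2})'$ is Lipschitz with constant $O(P^{15})$.

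Next I would integrate by parts twice in $\psi(n) = \int_0^1 g(\alp)^{7/2} e(-n\alp)\,\d\alp$. The boundary terms vanish by periodicity and $C^1$-smoothness, giving $\psi(n) = \frac{1}{(2\pi i n)^2}\int_0^1 (g^{7/2})''(\alp) e(-n\alp)\,\d\alp$ --- here the second integration by parts is legitimate because $(g^{7/2})'$ is absolutely continuous (being Lipschitz) with derivative $(g^{7/2})''$ defined a.e. and in $L^\infty$. Taking absolute values and using the $O(P^{15})$ bound on $(g^{7/2})''$ yields $|\psi(n)| \ll P^{15}/n^2$, which is exactly the claim and is summable over $n\neq 0$; together with the trivial bound $|\psi(n)|\le \int_0^1 |h(\alp)|^7\,\d\alp \ll P^{15}$ for all $n$ (in particular handling $n=0$), this gives $\sum_n |\psi(n)| < \infty$, hence the Fourier series $\sum_n \psi(n) e(n\alp)$ converges uniformly; since $|h(\alp)|^7$ is continuous and has these Fourier coefficients, the sum equals $|h(\alp)|^7$ pointwise, establishing \eqref{3.2}. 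The main obstacle is the bookkeeping around the zeros of $h$: one must verify carefully that $g^{7/2}$ is $C^1$ globally and that its derivative is Lipschitz (so that the second integration by parts is valid), rather than merely piecewise $C^2$; the cleanest route is to note that on any interval where $g>0$ the bound is the elementary computation above, while near a zero $\alp_0$ of $g$ one writes $g(\alp) = (\alp-\alp_0)^{2k} G(\alp)$ with $G(\alp_0)>0$ and $k\ge 1$, so $g^{7/2} = (\alp-\alp_0)^{7k} G^{7/2}$ has at least $7$ continuous derivatives there, comfortably more than enough. No genuinely hard analysis is involved; the lemma is a technical smoothness statement whose proof is entirely elementary once the representation $|h|^7 = (|h|^2)^{7/2}$ is in hand.
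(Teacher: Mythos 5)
Your proof is correct and follows essentially the same route as the paper: both bound $\frac{d^2}{d\alp^2}|h(\alp)|^7$ by $O(P^{15})$ via the factorisation $|h|^7=(|h|^2)^{7/2}$ (the paper writes this as $h(\alp)^{7/2}h(-\alp)^{7/2}$), integrate by parts twice, and deduce uniform convergence from absolute summability of the coefficients together with continuity. You are, if anything, somewhat more careful than the paper at the zeros of $h$; the only tiny slip is that $|\alp-\alp_0|^{7k}G(\alp)^{7/2}$ with $7k$ odd is $C^{7k-1}$ rather than $C^{7k}$ at $\alp_0$, but since $7k-1\ge 6\ge 2$ this is immaterial.
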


\begin{proof} We begin by observing that $h(\alp)$ is an analytic function, and hence $|h(\alp)|^7$ is a
 real valued function that is twice continuously differentiable. Note that when $h(\alp)=0$, then the first two
 derivatives of $|h(\alp)|^7$ are also $0$. By writing $|h(\alp)|^7=h(\alp)^{7/2}h(-\alp)^{7/2}$ when
 $h(\alp)\ne 0$, and noting that
$$h'(\alp)=2\pi i\sum_{x\in \calA(P,R)}x^4e(\alp x^4)\ll P^5$$
and
$$h''(\alp)=(2\pi i)^2\sum_{x\in \calA(P,R)}x^8e(\alp x^4)\ll P^9,$$
it follows that, uniformly in $\alp$, one has ${\displaystyle{\frac{\d^2\, \ }{\d\alp^2}}}|h(\alp)|^7\ll 
P^{15}$. Consequently, by integrating by parts, one deduces that
$$\int_0^1|h(\alp)|^7e(-n\alp)\d\alp =\int_0^1\frac{e(-n\alp)}{(2\pi i n)^2}\frac{\d^2\, \ }{\d\alp^2}
|h(\alp)|^7\d\alp \ll \frac{P^{15}}{n^2}.$$
Finally, since $|h(\alp)|^7$ is differentiable and the series on the right hand side of (\ref{3.2}) is absolutely
 convergent, one finds from \cite[Theorem 1.4.2]{DM1972} or \cite[Theorem 8.14, page 60]{Zyg2002}, for example, that the
 Fourier expansion (\ref{3.2}) converges to $|h(\alp)|^7$ uniformly in $\alp$. This completes the proof of the lemma.
\end{proof}

This lemma allows us to replace the mean value (\ref{2.2}) by a cubic moment of $\psi(n)$, truncated both
 in terms of $n$ and the magnitude of $\psi(n)$.

\begin{lemma}\label{lemma3.2} When $a,b,c,d$ are non-zero integers, one has
$$I(a,b,c,d)\ll P^9+\sum_{\substack{|n|<P^9\\ |\psi(n)|>1}}|\psi(n)|^3.$$
\end{lemma}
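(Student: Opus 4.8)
The plan is to substitute the Fourier expansion (\ref{3.2}) of Lemma~\ref{lemma3.1} into each of the three factors of the integrand in (\ref{2.2}) and then to isolate the diagonal contribution. Applying (\ref{3.2}) with argument $a\alp$, $b\bet$ and $c\alp+d\bet$ respectively, one writes $|h(a\alp)|^7=\sum_m\psi(m)e(am\alp)$, $|h(b\bet)|^7=\sum_k\psi(k)e(bk\bet)$ and $|h(c\alp+d\bet)|^7=\sum_n\psi(n)e(cn\alp+dn\bet)$; the key point is that, by the bound $\psi(n)\ll P^{15}/n^2$ supplied by Lemma~\ref{lemma3.1}, one has $\sum_{n\in\dbZ}|\psi(n)|<\infty$, so these series converge absolutely. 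Multiplying them together, interchanging summation with the integration over $[0,1)^2$ (a step justified by absolute convergence together with Fubini's theorem), and using that $\int_0^1\int_0^1e((am+cn)\alp+(bk+dn)\bet)\d\alp\d\bet$ equals $1$ when $am+cn=bk+dn=0$ and $0$ otherwise, one finds that only terms with $am=-cn$ and $bk=-dn$ contribute. Since $a,b,c,d$ are non-zero, such a triple $(m,k,n)$ is determined by $n$ alone, whence
$$I(a,b,c,d)=\sum_{\substack{n\in\dbZ\\ a\mid cn,\ b\mid dn}}\psi\bigl(-cn/a\bigr)\psi\bigl(-dn/b\bigr)\psi(n).$$

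To bound this diagonal sum I would exploit that $I(a,b,c,d)\ge 0$, so that by the triangle inequality and the elementary inequality $|xyz|\le\tfrac13(|x|^3+|y|^3+|z|^3)$ the task reduces to estimating three sums of the shape $\sum|\psi(\cdot)|^3$. Because $a,b,c,d\ne 0$, the maps $n\mapsto -cn/a$ and $n\mapsto -dn/b$ are injective on the relevant sets of $n$, so each of the three sums is at most $\sum_{m\in\dbZ}|\psi(m)|^3$, giving $I(a,b,c,d)\ll\sum_{m\in\dbZ}|\psi(m)|^3$. It then remains to truncate this sum. For $|m|\ge P^9$ the estimate $\psi(m)\ll P^{15}/m^2$ yields $\sum_{|m|\ge P^9}|\psi(m)|^3\ll P^{45}\sum_{|m|\ge P^9}m^{-6}\ll P^{45}(P^9)^{-5}\ll 1$; for $|m|<P^9$ with $|\psi(m)|\le 1$ the trivial bound gives $\sum|\psi(m)|^3\le\#\{m\in\dbZ:|m|<P^9\}\ll P^9$; and the terms that remain are exactly those displayed in the statement. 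Assembling these three contributions completes the proof.

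The individual steps above are routine once Lemma~\ref{lemma3.1} is available, and I expect the one point demanding genuine care to be the rigorous justification of the term-by-term integration of the triple product of Fourier series. This is precisely where the refined decay $\psi(n)\ll P^{15}/n^2$ of Lemma~\ref{lemma3.1}---as opposed to mere uniform convergence---is needed, since it upgrades (\ref{3.2}) to an absolutely convergent expansion and thereby makes both the rearrangement of the product and the appeal to Fubini's theorem legitimate.
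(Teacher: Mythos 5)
Your proof is correct and follows essentially the same route as the paper's: expand each factor's Fourier series, use orthogonality to isolate the diagonal, bound the resulting sum by $\sum_{n}|\psi(n)|^3$ (the paper uses H\"older's inequality together with an auxiliary function $\psi_l$ encoding the divisibility constraints, where you use the elementary inequality $|xyz|\le\tfrac13(|x|^3+|y|^3+|z|^3)$ and injectivity of $n\mapsto -cn/a$), and then split the sum at $|n|=P^9$ via the decay $\psi(n)\ll P^{15}/n^2$. One small caveat: your closing remark that uniform convergence alone would not justify the term-by-term integration is an overstatement --- truncating each uniformly convergent series and bounding the uniformly small product errors works perfectly well, and this is exactly what the paper invokes --- though your alternative route via absolute convergence is of course also legitimate.
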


\begin{proof} Our initial step is to accommodate non-zero integral coefficients $l$ by extracting from
 (\ref{3.2}) the relation
$$|h(l\alp)|^7=\sum_{n\in \dbZ}\psi(n)e(nl\alp)=\sum_{m\in \dbZ}\psi_l(m)e(m\alp),$$
where
\begin{equation}\label{3.3}
\psi_l(m)=\begin{cases} \psi(m/l),&\text{when $l|m$,}\\
0,&\text{otherwise.}\end{cases}
\end{equation}
Using the uniform convergence of the Fourier expansions of $|h(a\alp)|^7$, $|h(b\bet)|^7$ and 
$|h(c\alp+d\bet)|^7$, one finds from (\ref{2.2}) that $I(a,b,c,d)$ is equal to
$$\sum_{n_1,n_2,n_3\in \dbZ}\int_0^1\int_0^1 \psi_a(n_1)\psi_b(n_2)\psi(n_3)
e(n_1\alp+n_2\bet-n_3(c\alp+d\bet))\d\alp\d\bet .$$
By orthogonality, the double integral here can be non-zero only when $n_1=cn_3$ and $n_2=dn_3$,
 whence
$$I(a,b,c,d)=\sum_{n\in \dbZ}\psi_a(cn)\psi_b(dn)\psi(n).$$
Thus, on noting that (\ref{3.3}) delivers the estimate
$$\sum_{n\in \dbZ}|\psi_a(cn)|^3\le \sum_{m\in \dbZ}|\psi_a(m)|^3=\sum_{k\in \dbZ}|\psi(k)|^3,$$
we deduce by H\"older's inequality and symmetry that
$$I(a,b,c,d)\le \sum_{n\in \dbZ}|\psi(n)|^3.$$
We therefore conclude from Lemma \ref{lemma3.1} that
$$I(a,b,c,d)\ll \sum_{|n|\le P^9}|\psi(n)|^3+\sum_{|n|>P^9}(P^{15}/n^2)^3=
\sum_{\substack{|n|\le P^9\\ |\psi(n)|>1}}|\psi(n)|^3+O(P^9),$$
and the proof of the lemma is complete.
\end{proof}

A dyadic dissection simplifies the discussion to come. When $T>0$, write
$$M(T)=\sum_{\substack{|n|\le P^9\\ T<|\psi(n)|\le 2T}}|\psi(n)|^3.$$
By applying the triangle inequality to (\ref{3.1}), the estimates available from Lemma \ref{lemma2.1}
 furnish the bound $\psi(n)\le \psi(0)\ll P^{3+\Del_7}$, and hence
$$\sum_{\substack{|n|\le P^9\\ |\psi(n)|>1}}|\psi(n)|^3\le 
\sum_{\substack{l=0\\ 2^l\le P^{3+\Del_7}\log P}}^\infty M(2^l).$$
Consequently, for some positive number $T$ with $1\le T\le P^{3+\Del_7}\log P$, we may conclude from 
Lemma \ref{lemma3.2} that
\begin{equation}\label{3.4}
I(a,b,c,d)\ll P^9+(\log P)M(T).
\end{equation}

\section{The arithmetic harmonic analysis}
The moment has come to deliver the proof of Theorem \ref{theorem2.2}. We may suppose that $\eta>0$
 is small enough that the estimates implicit in (\ref{2.1}) hold. We bound $M(T)$ when 
$1\le T\le P^{3+\Del_7}\log P$. Define $\calZ$ to be the set of integers $n$ with $|n|\le P^9$ such that
 $T<|\psi(n)|\le 2T$, and write $Z=\text{card}(\calZ)$. For each $n\in \calZ$, we take $\ome_n=1$ when $\psi(n)>0$, and $\ome_n=-1$ when $\psi(n)<0$, and then put
$$K(\alp)=\sum_{n\in \calZ}\ome_ne(-n\alp).$$
Then we find from (\ref{3.1}) that
\begin{equation}\label{4.1}
\int_0^1|h(\alp)|^7K(\alp)\d\alp =\sum_{n\in \calZ}\ome_n\int_0^1|h(\alp)|^7e(-n\alp)
\d\alp =\sum_{n\in \calZ}|\psi(n)|>TZ.
\end{equation}
We bound $M(T)$ by estimating the integral on the left hand side of (\ref{4.1}), controlling the frequency of large Fourier
 coefficients $\psi(n)$.\par

Before proceeding further, we recall that the estimate
\begin{equation}\label{4.2}
\int_0^1|h(\alp)^4K(\alp)^2|\d\alp \ll P^3Z+P^{2+\eps}Z^{3/2}
\end{equation}
is an immediate consequence of \cite[Lemma 2.1]{KW2010}.

\begin{lemma}\label{lemma4.1} One has the estimates
$$Z\ll P^{\frac{28}{3}+\frac{1}{3}\Del_{10}}T^{-2}+P^{13+\frac{1}{2}\Del_{10}+\eps}T^{-3}$$
and
$$Z\ll P^{9+\Del_{10}}T^{-2}+P^{16+2\Del_{10}+\eps}T^{-4}.$$
\end{lemma}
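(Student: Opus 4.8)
The plan is to bound $Z$ in two different ways by pairing the lower bound $TZ < \int_0^1 |h(\alp)|^7 K(\alp)\,\d\alp$ from (\ref{4.1}) with the large values estimate (\ref{4.2}), and then to exploit the available admissible exponents $\Del_7$, $\Del_{10}$, $\Del_{12}$ to balance the resulting contributions. For the first estimate, I would split $|h(\alp)|^7 = |h(\alp)|^3 \cdot |h(\alp)|^4$ and apply Cauchy--Schwarz, writing
$$\int_0^1 |h(\alp)|^7 K(\alp)\,\d\alp \le \Bigl(\int_0^1 |h(\alp)|^6\,\d\alp\Bigr)^{1/2}\Bigl(\int_0^1 |h(\alp)|^8 K(\alp)^2\,\d\alp\Bigr)^{1/2}.$$
Hmm, but the cleaner route given the shape of (\ref{4.2}) is to distribute differently. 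Let me instead split as $|h(\alp)|^7 = |h(\alp)|^3 \cdot |h(\alp)^4|$ and use H\"older with exponents chosen so that one factor becomes $\int |h|^{10}$ (governed by $\Del_{10}$) and the other becomes $\int |h(\alp)^4 K(\alp)^2|$ (governed by (\ref{4.2})). Concretely, $\int_0^1 |h(\alp)|^7 |K(\alp)|\,\d\alp \le \bigl(\int_0^1 |h(\alp)|^{10}\,\d\alp\bigr)^{1/2}\bigl(\int_0^1 |h(\alp)^4 K(\alp)^2|\,\d\alp\bigr)^{1/2}$, after noticing $7 = \tfrac12\cdot 10 + \tfrac12\cdot 4$ and $|K| = \tfrac12\cdot 0 + \tfrac12\cdot 2$.

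First I would therefore combine this H\"older split with (\ref{2.1}) at $t=10$, giving $\int_0^1 |h(\alp)|^{10}\,\d\alp \ll P^{6+\Del_{10}}$, and with (\ref{4.2}), to obtain
$$TZ \ll P^{3+\frac12\Del_{10}}\bigl(P^3 Z + P^{2+\eps}Z^{3/2}\bigr)^{1/2} \ll P^{\frac92+\frac12\Del_{10}}Z^{1/2} + P^{4+\frac12\Del_{10}+\eps}Z^{3/4}.$$
Solving the first term for $Z$ yields $Z \ll P^{9+\Del_{10}}T^{-2}$, and solving the second yields $Z \ll P^{16+2\Del_{10}+\eps}T^{-4}$; taking the larger of the two gives exactly the second displayed estimate of Lemma \ref{lemma4.1}. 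For the first estimate I would instead use a split that brings in $\Del_{12}=0$ and $\Del_7$: write $|h(\alp)|^7 = |h(\alp)| \cdot |h(\alp)|^6$ — or more to the point, aim for $\int |h|^{12}$ and $\int |h(\alp)^4 K(\alp)^2|$ via a three-way H\"older, since $7$ lies between $4$ and $12$. Taking exponents so that $\int |h(\alp)^4 K(\alp)^2|\,\d\alp$ appears to the power $\tfrac13$ and $\int |h|^{12}$ (hence $\Del_{12}=0$) to the power $\tfrac23$ should reproduce the $P^{\frac{28}{3}+\frac13\Del_{10}}T^{-2}$ and $P^{13+\frac12\Del_{10}+\eps}T^{-3}$ terms; I would match the numerology $28/3$ and the $\Del_{10}$-coefficient $1/3$ to pin down the precise H\"older weights, likely splitting $|h|^7 |K|$ as a product of $|h(\alp)^4 K(\alp)^2|^{1/3}$, $|h|^{12}$-type and $|h|^{10}$-type pieces.

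The main obstacle I anticipate is getting the H\"older bookkeeping exactly right: there are several admissible exponents in play ($\Del_7,\Del_{10},\Del_{12}$) and two terms in the large values estimate (\ref{4.2}), so one must choose the H\"older weights in each of the two arguments so that the exponents of $P$ come out to be precisely $28/3$, $13$, $9$, $16$ with the stated fractional multiples of $\Del_{10}$, rather than something slightly weaker. A secondary point of care is that (\ref{4.2}) contributes two terms $P^3 Z$ and $P^{2+\eps}Z^{3/2}$, and after the H\"older step and extraction of $Z$ one must cleanly separate these into the two summands of each displayed bound and verify no cross terms are lost; this is routine once the weights are fixed, but it is where an error would most naturally creep in.
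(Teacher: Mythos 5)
Your derivation of the second estimate is exactly the paper's: the Cauchy--Schwarz split $|h|^7|K| \le (|h|^{10})^{1/2}(|h|^4|K|^2)^{1/2}$ combined with $\Delta_{10}$ and (\ref{4.2}), yielding $TZ\ll P^{\frac92+\frac12\Delta_{10}}Z^{1/2}+P^{4+\frac12\Delta_{10}+\eps}Z^{3/4}$ and then the two disentangled bounds. That part is correct and complete.

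For the first estimate, however, there is a genuine gap: you propose to factor $|h|^7|K|$ as $(|h|^4|K|^2)^{1/3}$ times pieces built purely from powers of $|h|$, but that cannot work, because the first factor only supplies $|K|^{2/3}$ and no product of pure $|h|$-moments can supply the missing $|K|^{1/3}$. The ingredient you are missing is Parseval's identity $\int_0^1|K(\alp)|^2\,\d\alp = Z$, which enters as a \emph{fourth} H\"older factor. The paper's split is
$$\int_0^1|h|^7|K|\,\d\alp\le\Bigl(\int_0^1|h^4K^2|\,\d\alp\Bigr)^{1/3}\Bigl(\int_0^1|K|^2\,\d\alp\Bigr)^{1/6}\Bigl(\int_0^1|h|^{10}\,\d\alp\Bigr)^{1/6}\Bigl(\int_0^1|h|^{12}\,\d\alp\Bigr)^{1/3},$$
with weights $\frac13+\frac16+\frac16+\frac13=1$, the $K$-exponent $2\cdot\frac13+2\cdot\frac16=1$, and the $h$-exponent $4\cdot\frac13+10\cdot\frac16+12\cdot\frac13=7$. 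Using $\Delta_{10}$, $\Delta_{12}=0$, (\ref{4.2}) and Parseval then gives $TZ\ll P^{\frac{14}{3}+\frac16\Delta_{10}}Z^{1/2}+P^{\frac{13}{3}+\frac16\Delta_{10}+\eps}Z^{2/3}$, from which the first displayed bound follows. Your suggested weight of $\tfrac23$ on $\int|h|^{12}$ would give $h$-exponent $4/3+8=28/3\ne 7$, and more generally any attempt with only three factors $(|h|^4|K|^2)^{1/3}(|h|^{12})^a(|h|^{10})^b$ with $\frac13+a+b=1$ forces $12a+10b=17/3$, which has no nonnegative solution. So this is not merely bookkeeping to be pinned down by numerology: the Parseval factor $\int|K|^2=Z$ is an indispensable structural ingredient.
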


\begin{proof} An application of H\"older's inequality shows that
\begin{align*}
\int_0^1|h(\alp)|^7K(\alp)\d\alp\le &\, \Bigl( \int_0^1|h(\alp)^4K(\alp)^2|\d\alp \Bigr)^{1/3}
\Bigl( \int_0^1 |K(\alp)|^2\d\alp \Bigr)^{1/6}\\
&\, \times \Bigl( \int_0^1|h(\alp)|^{10}\d\alp \Bigr)^{1/6} \Bigl( \int_0^1|h(\alp)|^{12}\d\alp
 \Bigr)^{1/3}.
\end{align*}
Recalling Lemma \ref{lemma2.1}, Parseval's identity and (\ref{4.2}), we deduce from (\ref{4.1}) that
\begin{align*}
TZ&\ll (P^3Z+P^{2+\eps}Z^{3/2})^{1/3}(Z)^{1/6}(P^{6+\Del_{10}})^{1/6}(P^8)^{1/3}\\
&\ll P^{\frac{14}{3}+\frac{1}{6}\Del_{10}}Z^{1/2}+
P^{\frac{13}{3}+\frac{1}{6}\Del_{10}+\eps}Z^{2/3},
\end{align*}
and the first of the claimed estimates follows by disentangling this inequality.\par

On the other hand, also by H\"older's inequality, one has similarly
\begin{align*}
\int_0^1|h(\alp)|^7K(\alp)\d\alp&\le \Bigl( \int_0^1|h(\alp)^4K(\alp)^2|\d\alp \Bigr)^{1/2}
\Bigl( \int_0^1|h(\alp)|^{10}\d\alp \Bigr)^{1/2}\\
&\ll (P^3Z+P^{2+\eps}Z^{3/2})^{1/2}(P^{6+\Del_{10}})^{1/2},
\end{align*}
whence
$$TZ\ll P^{\frac{9}{2}+\frac{1}{2}\Del_{10}}Z^{1/2}+P^{4+\frac{1}{2}\Del_{10}+\eps}Z^{3/4},$$
and the second conclusion follows by further disentangling.
\end{proof}

We are now equipped to establish Theorem \ref{theorem2.2}. On the one hand, if one has 
$T\le P^{3+\frac{3}{2}\Del_{10}}$, then one finds from the first estimate of Lemma \ref{lemma4.1} that
$$M(T)\ll ZT^3\ll P^{\frac{28}{3}+\frac{1}{3}\Del_{10}}T+P^{13+\frac{1}{2}\Del_{10}+\eps}
\ll P^{13+\frac{1}{2}\Del_{10}+\eps}.$$
On the other hand, when $P^{3+\frac{3}{2}\Del_{10}}<T\le P^{3+\Del_7}\log P$, one finds instead from the second estimate of Lemma \ref{lemma4.1} that
$$ZT^3\ll P^{9+\Del_{10}}T+P^{16+2\Del_{10}+\eps}T^{-1}
\ll P^{12+\Del_7+\Del_{10}+\eps}+P^{13+\frac{1}{2}\Del_{10}+\eps}.$$
Thus, on recalling (\ref{3.4}), we deduce that
$$I(a,b,c,d)\ll P^9+(\log P)P^{13+\frac{1}{2}\Del_{10}+\eps}\ll 
P^{13+\frac{1}{2}\Del_{10}+2\eps}.$$
This concludes the proof of Theorem \ref{theorem2.2}.

\section{Preparations for the circle method}
We suppose that the hypotheses of the statement of Theorem \ref{theorem1.1} are satisfied, so in particular
 $s\ge 22$. With the pairs $(a_j,b_j)\in \dbZ^2\setminus \{(0,0)\}$, we associate the binary forms
\begin{equation}\label{5.1}
\Lam_j=a_j\alp+b_j\bet\quad (1\le j\le s),
\end{equation}
and the two linear forms $L_1(\bftet)$ and $L_2(\bftet)$ defined for $\bftet\in \dbR^s$ by
\begin{equation}\label{5.2}
L_1(\bftet)=\sum_{j=1}^sa_j\tet_j\quad \text{and}\quad L_2(\bftet)=\sum_{j=1}^sb_j\tet_j.
\end{equation}
We describe two forms $\Lam_i$ and $\Lam_j$ as {\it equivalent} when there exists a non-zero rational
 number $\lam$ with $\Lam_i=\lam \Lam_j$. This notion defines an equivalence relation, and we refer to
 the number of elements in the equivalence class containing the form $\Lam_j$ as its {\it multiplicity}.\par

The hypotheses of Theorem \ref{theorem1.1} ensure that there is a non-singular real solution of the system
 (\ref{1.1}). By invoking homogeneity, therefore, one finds that there exists a real solution $\bfx=\bftet$ in
 $[0,1)^s$ satisfying the property that for some indices $i$ and $j$ with $1\le i<j\le s$, one has
$$\det \left( \begin{matrix}4a_i\tet_i^3&4a_j\tet_j^3\\ 4b_i\tet_i^3&4b_j\tet_j^3\end{matrix} \right)
\ne 0.$$
By relabelling variables if necessary, there is no loss of generality in supposing that $i=1$ and $j=2$, and
 by taking suitable linearly independent linear combinations of the equations comprising (\ref{1.1}), we may
 suppose further that $a_2=b_1=0$ and $a_1b_2\ne 0$. Thus, since $\bftet$ is a non-singular solution, we
 have $\tet_1\tet_2\ne 0$, and then there is no loss of generality in supposing also that $\tet_1>0$ and
 $\tet_2>0$. An application of the inverse function theorem consequently confirms that whenever $\del>0$
 is sufficiently small, the simultaneous equations
$$a_1x_1^4=-\sum_{i=3}^sa_i(\tet_i+\del)^4\quad \text{and}\quad
 b_2x_2^4=-\sum_{i=3}^sb_i(\tet_i+\del)^4$$
remain soluble for $x_1$ and $x_2$, with $x_1>0$ and $x_2>0$. In this way we see that the system
 (\ref{1.1}) possesses a non-singular real solution $\bftet$ satisfying $\bftet\in (0,1)^s$.\par

For any pair $(c,d)\in \dbZ^2\setminus \{(0,0)\}$, the linear form $cL_1(\bfxi)+dL_2(\bfxi)$ necessarily
 possesses at least $s-7$ non-zero coefficients. By choosing an appropriate subset $\calS$ of 
$\{2,\ldots ,s\}$ with $\text{card}(\calS)=21$, therefore, we may ensure that the forms $\Lam_j$ with
 $j\in \calS$ have multiplicity at most $7$. Suppose that these $21$ forms fall into $t$ equivalence classes,
 and that the multiplicities of the representatives of these classes are $r_1,\ldots ,r_t$. Then we may
 suppose that
\begin{equation}\label{5.3}
7\ge r_1\ge r_2\ge \ldots \ge r_t\quad \text{and}\quad r_1+\ldots +r_t=21,
\end{equation}
and hence also that $t\ge 3$. We relabel variables in (\ref{1.1}), and likewise in (\ref{5.1}) and (\ref{5.2}),
 so that $\calS$ becomes $\{2,\ldots ,22\}$, and for $1\le i\le t$ the  linear form $\Lam_{i+1}$ is in the
 $i$th equivalence class counted by $r_i$. We now fix a non-singular real solution $\bftet\in (0,1)^s$ of
 (\ref{1.1}) and a real number $\del$ with $0<\del<\tet_1$. In addition, we fix $\eta>0$ to be sufficiently
 small in the context of Lemma \ref{lemma2.1}.\par

Next define the generating functions
$$g(\alp)=\sum_{\del P<x\le P}e(\alp x^4),\quad H_0(\alp,\bet)=\prod_{j=2}^{22}h(\Lam_j),\quad
 H(\alp,\bet)=\prod_{j=2}^sh(\Lam_j).$$
Then by orthogonality, one has
$$\calN(P)\ge \int_0^1\int_0^1 g(\Lam_1)H(\alp,\bet)\d\alp\d\bet.$$
In order to define the Hardy-Littlewood dissection underlying our argument, we put $Q=(\log P)^{1/100}$,
 and when $a,b\in \dbZ$ and $q\in \dbN$ we define
$$\grN(q,a,b)=\{(\alp,\bet)\in [0,1)^2:|\alp-a/q|\le QP^{-4}\quad \text{and}\quad 
|\bet-b/q|\le QP^{-4}\}.$$
We then take $\grN$ to be the union of the boxes $\grN(q,a,b)$ with $0\le a,b\le q\le Q$ and $(q,a,b)=1$. 
Finally, we put $\grn=[0,1)^2\setminus \grN$.\par

The contribution of the major arcs $\grN$ in this dissection satisfies
\begin{equation}\label{5.4}
\iint_\grN g(\Lam_1)H(\alp,\bet)\d\alp\d\bet \gg P^{s-8},
\end{equation}
a fact we confirm in \S7. Meanwhile, in \S6 we show that
\begin{equation}\label{5.5}
\iint_\grn g(\Lam_1)H(\alp,\bet)\d\alp\d\bet \ll P^{s-8}(\log \log P)^{-1}.
\end{equation}
The desired conclusion $\calN(P)\gg P^{s-8}$ is immediate from (\ref{5.4}) and (\ref{5.5}) on noting that
 $[0,1)^2$ is the disjoint union of $\grN$ and $\grn$.

\section{The minor arc treatment}
The analysis of the minor arc contribution proceeds in two phases, one dominated by the use of Weyl's inequality, and a second
 in which pruning methods are deployed. We begin preparatory work for the first stage by deriving a consequence of Theorem 
\ref{theorem2.3}.

\begin{lemma}\label{lemma6.1} One has
$$\int_0^1\int_0^1|H_0(\alp,\bet)|\d\alp\d\bet \ll P^{13+\frac{1}{2}\Del_{10}+\eps}.$$
\end{lemma}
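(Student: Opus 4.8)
The plan is to reduce the integral $\int_0^1\int_0^1|H_0(\alp,\bet)|\,\d\alp\d\bet$ to a bounded number of integrals of the form treated by Theorem~\ref{theorem2.3}, exploiting the fact that the $21$ linear forms $\Lam_2,\ldots,\Lam_{22}$ fall into $t\ge 3$ equivalence classes with multiplicities $r_1,\ldots,r_t$ satisfying (\ref{5.3}). Recall that $H_0(\alp,\bet)=\prod_{j=2}^{22}h(\Lam_j)$, and that forms in the same equivalence class are rational scalar multiples of one another. First I would group the factors of $|H_0|$ according to equivalence class, so that $|H_0(\alp,\bet)|=\prod_{i=1}^t\prod_{k}|h(\mu_{i,k}\Lam_{i+1})|$, where the inner product runs over the $r_i$ forms equivalent to $\Lam_{i+1}$ and each $\mu_{i,k}$ is a non-zero rational. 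Since $h(\alp)\ll P$ trivially and the number of forms is bounded, the essential structure is that $|H_0|$ is a product in which, after collecting, the three \emph{distinct} directions $\Lam_{2}$ (representing class~$1$), $\Lam_{3}$ (class~$2$), $\Lam_{4}$ (class~$3$) each appear raised to some positive power, with all remaining factors contributing no more than $O(P^{21-r_1-r_2-r_3})$ via the trivial bound if $t>3$, or nothing further if $t=3$.

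The key step is an application of H\"older's inequality to distribute the $21$ factors among the three distinguished directions so that each carries exponent exactly $7$. Concretely, using the trivial bound on any factors beyond the first $r_1+r_2+r_3$ forms when $t>3$, and the elementary inequality $|h(\mu\Lam)|\le$ (bounded) rearrangement under rational scaling — more precisely, absorbing the rational scalars $\mu_{i,k}$ by noting that a change of variables $\alp\mapsto\alp$, $\bet\mapsto\bet$ rescaled by the relevant denominators affects the integral only by a bounded constant — one arranges that
$$
\int_0^1\int_0^1|H_0(\alp,\bet)|\,\d\alp\d\bet
\ll P^{\max(0,\,21-r_1-r_2-r_3)}\prod_{i=1}^{3}\Bigl(\int_0^1\int_0^1|h(\nu_i\Lam_{i+1})|^{7}\,\text{(mixed)}\Bigr)^{1/3},
$$
and then a single application of Theorem~\ref{theorem2.3} to the triple $(\Lam_2,\Lam_3,\Lam_4)$ — whose pairwise resultants are non-zero precisely because these three forms lie in distinct equivalence classes — yields the bound $P^{13+\frac12\Del_{10}+\eps}$ for the product of integrals. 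Since $r_1+r_2+r_3\le 21$ always, and since by (\ref{5.3}) we have $r_1+r_2+r_3\ge 21$ exactly when $t=3$, in the case $t>3$ one must check that the saving $21-r_1-r_2-r_3$ from the discarded trivial factors is compensated — here one instead applies H\"older to push \emph{all} $21$ factors into the three directions without waste, raising the surviving factors to the seventh power with appropriate multiplicities summing correctly, so that no trivial bound is needed and the exponent is uniformly $13+\frac12\Del_{10}+\eps$.

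The main obstacle I anticipate is the bookkeeping in the H\"older step when the multiplicities $r_i$ are unequal and $t>3$: one needs to verify that the $21$ factors can always be partitioned into three bundles of total "mass" permitting each distinguished direction to receive weight exactly $7$ after taking cube roots, which requires $r_1\le 7$ (guaranteed by the hypothesis via the choice of $\calS$) together with $r_1+r_2+r_3$ being neither too small nor too large relative to $21$. Handling the rational scalars $\mu_{i,k}$ cleanly — so that Theorem~\ref{theorem2.3}, which is stated for integer coefficients $c_i,d_i$, applies after clearing denominators without degrading the exponent — is a routine but slightly delicate point, resolved by absorbing denominators into the implicit constant. Everything else (the trivial bound $h\ll P$, positivity, periodicity) is immediate.
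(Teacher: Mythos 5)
Your high-level idea --- reduce the $21$-fold mean value to applications of Theorem~\ref{theorem2.3} via H\"older's inequality --- is viable, but the H\"older step as you have set it up does not work, and the particular issues you flag as ``routine but delicate'' are not the real ones.

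The central difficulty is your insistence on \emph{three} ``bundles.'' When $t>3$, the forms $\Lam_j$ with $j$ in classes $4,\ldots,t$ are genuinely inequivalent to all of $\Lam_2,\Lam_3,\Lam_4$; there is no way to ``push all $21$ factors into the three directions'' as you propose, and discarding those factors via the trivial bound loses a factor $P^{21-r_1-r_2-r_3}$ that is fatal (with $t=21$ this is $P^{18}$). The display you write down is also incorrect as stated: it is a product of three \emph{separate} mean values $\int_0^1\int_0^1|h(\nu_i\Lam_{i+1})|^7\,\d\alp\d\bet$, each of which is a one-dimensional moment of order $P^{3+\Del_7}$ in disguise, and a product of such terms is \emph{not} the entangled moment treated by Theorem~\ref{theorem2.3}. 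The ``(mixed)'' annotation suggests you sense something is off, but the formula does not produce an integral to which Theorem~\ref{theorem2.3} applies. Meanwhile, the point you worry about --- rational scalars $\mu_{i,k}$ and clearing denominators --- is in fact a non-issue: each $\mu_{i,k}\Lam_{i+1}$ is one of the original forms $\Lam_j$, which already has integer coefficients, so Theorem~\ref{theorem2.3} applies directly without any clearing of denominators. (There is no ``elementary inequality'' relating $|h(\mu\Lam)|$ to $|h(\Lam)|$; such a relation does not exist, but fortunately it is not needed.)

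A correct version of your H\"older route does exist, but uses \emph{seven} triples rather than three bundles. Since $r_1\le 7$ and $\sum r_i=21$, the $21$ indices can be partitioned into seven triples, each consisting of three pairwise-inequivalent forms (a greedy argument taking one index from each of the three largest remaining classes succeeds: after $k$ steps the maximum class size is at most $7-k$ and at least three classes remain non-empty). Writing the partition as $\tau_1,\ldots,\tau_7$ and applying H\"older with seven factors and exponent $7$ on each gives
$$\int_0^1\int_0^1|H_0(\alp,\bet)|\,\d\alp\d\bet\le\prod_{m=1}^7\Bigl(\int_0^1\int_0^1\prod_{j\in\tau_m}|h(\Lam_j)|^7\,\d\alp\d\bet\Bigr)^{1/7},$$
and each inner integral satisfies the hypotheses of Theorem~\ref{theorem2.3}, giving the required $P^{13+\frac12\Del_{10}+\eps}$. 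This is a genuinely different route from the paper, which instead applies \cite[Lemma~5]{BW2007} to pass to a single integral $\int\int h_2^{r_1}\cdots h_{t+1}^{r_t}\,\d\alp\d\bet$ and then iteratively redistributes the exponents $r_i$ using \cite[Lemma~6]{BW2007} until $t=3$ and $(r_1,r_2,r_3)=(7,7,7)$, before invoking Theorem~\ref{theorem2.3} once. As written, though, your proposal does not reach a correct argument: the three-bundle decomposition, the trivial bound for extra classes, and the claimed reduction to a product of univariate $h$-moments are all gaps.
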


\begin{proof} Recall the discussion of the multiplicities associated with $\Lam_2,\ldots ,\Lam_t$, and in
 particular the hypothesis (\ref{5.3}). By applying \cite[Lemma 5]{BW2007}, just as in the deduction of 
\cite[equation (4.5)]{BW2007}, one finds that
\begin{equation}\label{6.1}
\int_0^1\int_0^1|H_0(\alp,\bet)|\d\alp\d\bet \ll \int_0^1\int_0^1 h_2^{r_2}\ldots h_t^{r_t}
\d\alp\d\bet ,
\end{equation}
where we have abbreviated $|h(\Lam_j)|$ to $h_j$. Let $\nu$ be a non-negative integer, and suppose that
 $r_{t-1}=r_t+\nu<7$. Then we may apply the argument of the proof of \cite[Lemma 6]{BW2007}
 following equation (4.5) therein to obtain a bound of the shape (\ref{6.1}), in which $r_{t-1}$ and
 $r_t=r_{t-1}-\nu$ are replaced by $r_{t-1}+1$ and $r_t-1$, respectively, or else by $r_{t-1}-\nu-1$ and
 $r_t+\nu+1$. By relabelling if necessary, we thus derive a bound of the shape (\ref{6.1}), subject to
 (\ref{5.3}), wherein either $r_t$ is reduced, or else $t$ is reduced. By repeating this process, therefore, we
 ultimately arrive at the situation in which $t=3$ and $(r_1,r_2,r_3)=(7,7,7)$. From here the desired
 estimate follows from (\ref{6.1}) and Theorem \ref{theorem2.3}.
\end{proof}

In order to prepare for the pruning process, we introduce a conventional set of one-dimensional major arcs. Define $\grM$ to 
be the union of the intervals
$$\grM(q,a)=\{ \alp\in [0,1):|q\alp-a|\le P^{-7/2}\}$$
with $0\le a\le q\le P^{1/2}$ and $(a,q)=1$, and put $\grm=[0,1)\setminus \grM$. In addition, when $\lam,A\in \dbR$,
 define the mean value $J(\lam)=J(\lam;A)$ by putting
\begin{equation}\label{6.2}
J(\lam;A)=\int_\grM |g(a_1\tet)|^{9/4}|h(A\tet+\lam)|^4\d\tet .
\end{equation}

\begin{lemma}\label{lemma6.2} For each $A\in \dbQ\setminus\{0\}$, one has $\sup_{\lam\in \dbR}J(\lam;A)\ll P^{9/4}$.
\end{lemma}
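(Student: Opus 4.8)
The plan is to bound $J(\lam;A)$ uniformly by combining a pointwise major-arc estimate for the smooth Weyl sum $h(A\tet+\lam)$ with a classical moment bound for $g(a_1\tet)$ on the major arcs $\grM$. First I would recall that on the set $\grM$ one has the standard Weyl-type bound $g(a_1\tet)\ll P(q+P^4|q\tet-a|)^{-1/4}$ whenever $\tet$ lies in $\grM(q,a)$, and more crucially a corresponding estimate for the smooth Weyl sum. Since $A$ is a fixed non-zero rational, the translate $A\tet+\lam$ ranges over a dilated-and-shifted copy of $[0,1)$, and Dirichlet's theorem provides an approximation $|A\tet+\lam-c/r|\le r^{-1}P^{-7/2}$ with $r\le P^{7/2}$; either $r$ is small and $A\tet+\lam$ is well-approximated by a rational with small denominator, or $r$ is large and Weyl's inequality (or the smooth-number version) gives $h(A\tet+\lam)\ll P^{1-\sig+\eps}$ for some positive $\sig$. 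The key inequality to invoke is therefore a pointwise bound of the shape $h(A\tet+\lam)\ll P^{1+\eps}(r+P^4|rA\tet+r\lam-c|)^{-1/4}$ on the corresponding major arcs, and a saving of a positive power of $P$ off the minor arcs.

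The second step is to split $\grM = \bigcup \grM(q,a)$ and, on each such interval, estimate
$$\int_{\grM(q,a)}|g(a_1\tet)|^{9/4}|h(A\tet+\lam)|^4\d\tet$$
by bounding $|g(a_1\tet)|^{9/4}$ trivially, or nearly so, by $P^{9/4}$ times the usual $(q+P^4|q a_1\tet-a_1a|)^{-9/16}$ decay, and then integrating $|h(A\tet+\lam)|^4$ against it. Here I would use the known fourth-moment estimate for smooth quartic Weyl sums, namely $\int_0^1|h(\bet)|^4\d\bet\ll P^{\eps}$ — indeed $\int_0^1|h(\bet)|^4\d\bet\ll P^{2+\eps}$ in the standard normalisation, but since $h$ has $\sim P$ terms the diagonal contribution gives the main term $P^{2+\eps}$; combined with the $P^{9/4}$ from $g^{9/4}$ and the measure normalisation one needs the total to come out as $P^{9/4}$, so the fourth-moment input must be used in the sharper form that accounts for the restriction to $\grM$. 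Concretely I expect to use that $g(a_1\tet)$ has trivially $|g(a_1\tet)|\le P$, whence $|g(a_1\tet)|^{9/4}\le P^{1/4}|g(a_1\tet)|^2$, reducing matters to bounding $P^{1/4}\int_\grM|g(a_1\tet)|^2|h(A\tet+\lam)|^4\d\tet$; the quadratic factor $|g|^2$ has a clean major-arc mean value (it is essentially $P^{4}$ times the measure of $\grM$, i.e.\ $\ll P^{4}\cdot P^{1/2}\cdot P^{-7/2}=P$), and pairing this with the $L^\infty$ or $L^4$ control on $h$ yields the claimed $P^{9/4}$.

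The main obstacle will be getting the exponents to balance exactly at $9/4$ rather than something slightly larger: the interplay between the $9/4$-th power of $g$ (which is not an even integer, so one cannot directly appeal to orthogonality for $g$ alone) and the fourth power of $h$ must be handled by a Hölder split that distributes the major-arc decay of both $g$ and $h$ optimally across the arcs $\grM(q,a)$. I would therefore most likely write $|g|^{9/4}|h|^4 = (|g|^2|h|^2)\cdot(|g|^{1/4}|h|^2)$ and apply Hölder with a carefully chosen pair of conjugate exponents, using the trivial bound $|g|^{1/4}\le P^{1/4}$ on one factor and the sixth-moment (or a Hua-type) estimate $\int_\grM|g(a_1\tet)h(A\tet+\lam)|^{?}\d\tet$ on the other. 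Summing the resulting contributions over $q\le P^{1/2}$ and $0\le a\le q$, the arithmetic sum $\sum_q q\cdot q^{-\del}$ converges for the relevant $\del>1$ obtained from the $1/4$-power decay, producing a bound uniform in $\lam$ and hence $\sup_{\lam}J(\lam;A)\ll P^{9/4}$. The uniformity in $\lam$ is automatic throughout, since every estimate for $h(A\tet+\lam)$ is via Dirichlet's approximation theorem applied to the shifted argument, and none of the bounds sees $\lam$ except through this translation-invariant mechanism.
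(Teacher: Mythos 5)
There is a genuine gap here, and the approach as sketched would not close. Your plan rests on two moves: (a) trading $|g(a_1\tet)|^{9/4}\le P^{1/4}|g(a_1\tet)|^2$, and (b) controlling $h(A\tet+\lam)$ via Dirichlet approximation (or a moment estimate) in the shifted variable. Neither gives what is needed. For (a), even using the full major-arc decay $g(a_1\tet)\ll \kap(q)P(1+P^4|\tet-a/q|)^{-1/2}$, one finds $\int_\grM|g|^2\ll P^{-1}$ (the trivial $|g|^2\le P^2$ against $|\grM|\asymp P^{-3}$, a bit better with the decay), and pairing this with $P^{1/4}$ and a supremum bound $|h|^4\le P^4$ gives at best $P^3$, far short of $P^{9/4}$; your heuristic "$P^4\cdot P^{1/2}\cdot P^{-7/2}=P$" conflates the factor $|g|^2\le P^2$ with $P^4$. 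For (b), the crucial difficulty is that $h$ is evaluated at $A\tet+\lam$ where $\lam$ is arbitrary and $\tet$ is constrained to $\grM$ in the variable $\tet$, not in the variable $A\tet+\lam$. Dirichlet applied to $A\tet+\lam$ produces no useful structure: there is no reason the approximating rational should be small when $\tet$ is near $a/q$, precisely because $\lam$ is a free shift. So no pointwise decay of the form $h(A\tet+\lam)\ll P^{1+\eps}(r+P^4|\cdots|)^{-1/4}$ is available uniformly in $\lam$.

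The missing idea in the paper's proof is an arithmetic, not an analytic, input: after clearing the denominator of $A=B/S$ and passing to the major-arc expansion of $g$, one is left with the quantity $\sum_{a=1}^q|h(B(a/q+\gam)+\lam)|^4$, and this is estimated by orthogonality. Expanding the fourth power, the sum over $a\pmod q$ detects the congruence $q\mid B(x_1^4+x_2^4-x_3^4-x_4^4)$, yielding $\sum_a|h(\cdots)|^4\le|B|^4(Pq^{-1}+1)^4q\rho(q)$ with $\rho(q)$ a congruence count; this count is then controlled by the multiplicative bound $q\rho(q)\ll q^4\sum_{r\mid q}r\kap(r)^4$ from Vaughan--Wooley. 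The whole point is that no cancellation in $h$ is used at all — what is used is the averaging over the arithmetic progression of centres $a/q$. The shift $\lam$ disappears entirely at the orthogonality step, which is why the bound is uniform in $\lam$. The remaining exponent count is then powered by the observation $\kap(q)^{9/4}\le q^{-1/20}\kap(q)^2$, which makes the sum over $q$ (now weighted by $q^4\sum_{r\mid q}r\kap(r)^4$) converge. Your H\"older-split plan never reaches this arithmetic identity, and so never produces the necessary saving; without it the exponent lands around $3$ rather than $9/4$.
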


\begin{proof} Suppose that $\lam\in \dbR$. Write $A=B/S$ with $B\in \dbZ\setminus \{0\}$, $S\in \dbN$ and $(B,S)=1$. We
 define the modified set of major arcs $\grW$ by putting
$$\grW=\{\bet \in [0,1):S\bet \in \grM\}.$$
Then a change of variable yields
\begin{equation}\label{6.3}
J(\lam)=S\int_\grW |g(a_1S\bet)|^{9/4}|h(B\bet +\lam)|^4\d\bet .
\end{equation}
It follows from the definition of $\grM$ that for each $\bet \in \grW$, there exist $c\in \dbZ$ and $r\in \dbN$ with 
$0\le c\le r\le P^{1/2}$, $(c,r)=1$ and $|S\bet r-c|\le P^{-7/2}$. Thus there exist $a\in \dbZ$ and $q\in \dbN$ with 
$0\le a\le q\le SP^{1/2}$, $(a,q)=1$ and $|q\bet -a|\le P^{-7/2}$. Next, we define $\kap(q)$ to be the multiplicative function
 defined for $q\in \dbN$ by taking, for prime numbers $p$ and non-negative integers $l$,
$$\kap(p^{4l})=p^{-l},\quad \kap(p^{4l+1})=4p^{-l-1/2},\quad \kap(p^{4l+2})=p^{-l-1},\quad 
\kap(p^{4l+3})=p^{-l-1}.$$
Then as a consequence of \cite[Theorem 4.1 and Lemmata 4.3, 4.4 and 6.2]{Vau1997},
\begin{align*}
g(a_1S\bet)&\ll \kap(q)P(1+P^4|\bet -a/q|)^{-1}+q^{1/2+\eps}\\
&\ll \kap(q)P(1+P^4|\bet -a/q|)^{-1/2}.
\end{align*}
We therefore deduce from (\ref{6.3}) that
\begin{equation}\label{6.4}
J(\lam)\ll \sum_{1\le q\le SP^{1/2}}(P\kap(q))^{9/4}\sum_{a=1}^q\int_{-\infty}^\infty 
\frac{|h(B(a/q+\gam)+\lam)|^4}{(1+P^4|\gam|)^{9/8}}\d\gam .
\end{equation}

\par By orthogonality, we find that
\begin{equation}\label{6.5}
\sum_{a=1}^q|h(B(a/q+\gam)+\lam)|^4\le q\sum_{\substack{1\le x_1,\ldots ,x_4\le P\\ 
q|B(x_1^4+x_2^4-x_3^4-x_4^4)}}1\le |B|^4(Pq^{-1}+1)^4q\rho(q),
\end{equation}
where $\rho(q)$ denotes the number of solutions of the congruence
$$x_1^4+x_2^4\equiv x_3^4+x_4^4\mmod{q},$$
with $1\le x_i\le q$ $(1\le i\le 4)$. The argument of \cite{VW2000} leading to equation (5.8) of that paper shows that
$$q\rho(q)\ll q^4\sum_{r|q}r\kap(r)^4.$$
Hence, on substituting (\ref{6.5}) into (\ref{6.4}), we obtain
$$J(\lam)\ll P^{25/4}\sum_{1\le q\le SP^{1/2}}\kap(q)^{9/4}\sum_{r|q}r\kap(r)^4\int_{-\infty}^\infty 
(1+P^4|\gam|)^{-9/8}\d\gam .$$
Observe that $\kap(q)^{9/4}\le q^{-1/20}\kap(q)^2$, and hence the argument completing the proof of 
\cite[Lemma 5.4]{VW2000} shows that for a suitable positive constant $C$, one has
$$J(\lam)\ll P^{9/4}\sum_{1\le q\le P}\kap(q)^{9/4}\sum_{r|q}r\kap(r)^4\ll P^{9/4}\prod_p 
\Bigl( 1+C\sum_{h=1}^\infty p^{-1-h/20}\Bigr).$$
Thus we obtain $J(\lam)\ll P^{9/4}$, and the proof of the lemma is complete.
\end{proof}

When $\grB\subseteq [0,1)^2$ is measurable, define the auxiliary mean value
$$U(\grB)=\iint_\grB |g(\Lam_1)H_0(\alp,\bet)|\d\alp\d\bet .$$

\begin{lemma}\label{lemma6.3} One has $U(\grn)\ll P^{14}(\log \log P)^{-1}$.
\end{lemma}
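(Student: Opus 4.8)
The plan is to bound $U(\grn)$ by decoupling the two roles played by the minor arcs: on the one hand $\grn$ is minor with respect to the single form $\Lam_1=a_1\alpha$ (via $g$), and on the other hand it interacts with the entangled $21$st moment carried by $H_0$. First I would recall that, by the relabelling of \S5, the form $\Lam_1$ depends only on $\alpha$ (since $b_1=0$), so that $g(\Lam_1)=g(a_1\alpha)$ and a Weyl-type estimate in the single variable $\alpha$ is available. Fixing $\bet$ and applying Weyl's inequality for quartic exponential sums over the arc $\alpha\in\grm$ (the one-dimensional minor arcs defined just above), one obtains a pointwise bound $g(a_1\alpha)\ll P^{1-\sigma+\eps}$ for some $\sigma>0$ (one may take $\sigma=1/8$ from the classical Weyl estimate, or invoke the sharper smooth-number analogue), uniformly in $\bet$. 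Splitting $\grn$ into the piece where $\alpha\in\grm$ and the complementary piece where $\alpha\in\grM$ but $\bet$ is suitably minor is the organising device.

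On the piece where $\alpha\in\grm$, I would pull out the Weyl bound $\sup_{\alpha\in\grm}|g(a_1\alpha)|\ll P^{3/4+\eps}$ and integrate the remaining $|H_0(\alp,\bet)|$ over the full square $[0,1)^2$, invoking Lemma~\ref{lemma6.1} to get
$$\iint_{\alpha\in\grm}|g(\Lam_1)H_0(\alp,\bet)|\,\d\alp\d\bet\ll P^{3/4+\eps}\cdot P^{13+\frac12\Del_{10}+\eps}\ll P^{14},$$
since $\tfrac34+13+\tfrac12\Del_{10}<14$ by the numerical value $\Del_{10}=0.213431$ from Lemma~\ref{lemma2.1} (indeed $13.75+0.107\dots<14$). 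With a little care this actually beats $P^{14}$ by a power of $P$, so it certainly absorbs the $(\log\log P)^{-1}$ loss; alternatively one inserts the extra $(\log\log P)^{-1}$ saving at the pruning stage. The genuinely delicate piece is where $\alpha\in\grM$. Here $g(a_1\alpha)$ need not be small, but membership in $\grn$ forces $(\alp,\bet)$ to avoid the two-dimensional major arcs $\grN$, which (since $\grN$ uses the common denominator $q\le Q$ with $Q=(\log P)^{1/100}$) means $\bet$ must be minor relative to $\alpha$'s rational approximation. I would decompose $H_0=\prod_{j=2}^{22}h(\Lam_j)$, peel off three factors $h(\Lam_{j})$ that depend genuinely on $\bet$ (using that $t\ge3$, so three inequivalent forms are present), and apply Hölder to reserve a high power of $h$ in $\bet$ against a factor of $g(a_1\alpha)^{9/4}$; the point is that for $\alpha$ fixed in $\grM$ the one-variable integral in $\bet$ is precisely of the shape $J(\lam;A)$ estimated in Lemma~\ref{lemma6.2}, giving the saving $P^{9/4}$ in place of the trivial $P^{7/2}$ for a factor $|g|^{9/4}|h|^4$.

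More concretely, on the $\alpha\in\grM$ piece I would write, after Hölder in the $\bet$-variable with exponents tuned so that one block is $|g(a_1\alpha)|^{9/4}|h(\Lam_{j_0})|^4$ and the remaining blocks are pure moments $\int_0^1|h(\Lam_j)|^{t}\d\bet$ with $t\in\{7,10,12\}$ controlled by Lemma~\ref{lemma2.1} (using that $\Lam_j$ with $b_j\ne0$ is a non-degenerate linear function of $\bet$, so a linear change of variables reduces each such integral to the standard $|h|^t$ moment), bound the $\bet$-integral over the minor range by $J(\lam;A)^{1/4}\cdot(\text{moment factors})$, and then integrate the resulting bound in $\alpha$ over $\grM$ — which has measure $\ll P^{-4+\eps}$ — against whatever power of $|g(a_1\alpha)|$ survived, estimated by Hua-type or mean-value bounds on $\grM$. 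The exponents should be arranged so the total comes out as $P^{14-\tau}$ for some $\tau>0$, comfortably inside $P^{14}(\log\log P)^{-1}$; if the bookkeeping is tight one instead exploits that on $\grM$ the sum $g(a_1\alpha)$ is controlled by the singular-series factor $\kap(q)$ summed over $q\le P^{1/2}$, whose tail past $q\le Q$ contributes the requisite $(\log\log P)^{-1}$. The main obstacle I anticipate is precisely this last coordination: ensuring that the Hölder split leaves *simultaneously* a factor $|g|^{9/4}$ for Lemma~\ref{lemma6.2}, enough copies of $h$-moments (with total exponent $21$, matching $r_2+\dots+r_t$ minus the $4$ reserved) to invoke Lemma~\ref{lemma2.1} without overspending, and that the arithmetic of $\grN$ versus $\grM$ correctly transfers "$(\alp,\bet)\notin\grN$" into a usable minor-arc condition on $\bet$ given $\alpha\in\grM$; this is the step where the peculiar common-denominator shape of $\grN$ is essential and where the $(\log\log P)^{-1}$ must be extracted.
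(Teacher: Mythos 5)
Your overall strategy matches the paper's: split $\grn$ into $\gre=\{(\alp,\bet)\in\grn:\alp\in\grm\}$ and $\grE=\{(\alp,\bet)\in\grn:\alp\in\grM\}$, dispose of $\gre$ by Weyl's inequality for $g(a_1\alp)$ combined with Lemma~\ref{lemma6.1}, and handle $\grE$ by a H\"older split engineered so that one ingredient is amenable to Lemma~\ref{lemma6.2} while a pruning sup supplies the logarithmic saving. Two points, however, need attention. First, a minor slip: on $\grm$ the enhanced Weyl bound is $\sup_{\alp\in\grm}|g(a_1\alp)|\ll P^{7/8+\eps}$ (as your prose correctly says, $\sigma=1/8$), not $P^{3/4+\eps}$ as displayed; the estimate $U(\gre)\ll P^{111/8+\frac12\Del_{10}+\eps}\ll P^{14}Q^{-1}$ is still comfortably true.

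Second, and this is a genuine gap, the $\grE$ contribution admits \emph{no} power-of-$P$ saving, so your claims that the total comes out as $P^{14-\tau}$, or that a $\kap(q)$-tail over $Q<q\le P^{1/2}$ could serve as a fallback, do not survive scrutiny. After reducing $H_0$ to $g_1h_2^7h_3^7h_4^7$ (by the argument of Lemma~\ref{lemma6.1}, forcing multiplicities $(7,7,7)$), the H\"older split is
$$U(\grE)\ll \Bigl(\sup_{\grE}|h(\Lam_2)h(\Lam_3)h(\Lam_4)|\Bigr)^{5/27}(U_{23}U_{24})^{2/9}(V_{23}V_{24})^{17/81}V_{34}^{11/81},$$
where $U_{ij}=\iint_\grE g_1^{9/4}h_i^4h_j^{12}\d\alp\d\bet$ and $V_{ij}=\iint_\grE h_i^{12}h_j^{12}\d\alp\d\bet$. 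A change of variable makes the inner $\alp$-integral over $\grM$ equal to $J(\lam;A)$ (note that $J$ is an $\alp$-integral, not a $\bet$-integral as your sketch suggests), so $U_{ij}\ll P^8\cdot P^{9/4}$ while $V_{ij}\ll P^{16}$. Assembling these, one finds $U(\grE)\ll P^{14}\bigl(P^{-3}\sup_\grE|h_2h_3h_4|\bigr)^{5/27}$; the exponents of $P$ land exactly on $14$ with no slack. The entire $(\log\log P)^{-1}$ must therefore come from the sup: on $\grn$, Wooley's pruning lemma \cite[Lemma 2.1]{Woo2003}, together with the structure of $\grN$, gives $\sup_{(\alp,\bet)\in\grn}|h(\Lam_k)h(\Lam_l)|\ll P^2Q^{-1/100}$ for inequivalent $\Lam_k,\Lam_l$, whence $\sup_\grE|h_2h_3h_4|\ll P^3Q^{-1/100}$ and $U(\grE)\ll P^{14}Q^{-1/600}$. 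Your suggested $\kap(q)$-tail argument fails because on $\grE\cap\grn$ the small-$q$ portion of $\grM$ is not excluded -- indeed $\alp$ near $a/q$ with $q\le Q$ while $\bet$ lacks a compatible approximation is precisely the hard case, and there $g(a_1\alp)$ can be of full size $\asymp P$. The pruning sup is thus the sole source of the saving and cannot be replaced by the devices you float as alternatives.
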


\begin{proof} Consider the auxiliary sets
$$\gre=\{ (\alp,\bet)\in \grn:\alp\in \grm\}\quad \text{and}\quad \grE=\{ (\alp,\bet)\in \grn:\alp\in \grM\}.$$
The treatment of the set $\gre$ is straightforward. On recalling that $\Lam_1=a_1\alp$, one finds via an enhanced version of
 Weyl's inequality (see \cite[Lemma 3]{Vau1986b}) that
$$\sup_{(\alp,\bet)\in \gre}|g(\Lam_1)|=\sup_{\alp\in\grm}|g(a_1\alp)|\ll P^{7/8+\eps}.$$
Then from Lemma \ref{lemma6.1}, one deduces that
\begin{equation}\label{6.6}
U(\gre)\ll P^{7/8+\eps}\int_0^1\int_0^1|H_0(\alp,\bet)|\d\alp\d\bet \ll P^{\frac{111}{8}+\frac{1}{2}\Del_{10}+\eps}
\ll P^{14}Q^{-1}.
\end{equation}

\par We turn next to the complementary set $\grE$, handling this via Lemma \ref{lemma6.2}. By applying the argument
 underlying the proof of Lemma \ref{lemma6.1}, as in the discussion following the statement of \cite[Lemma 10]{BW2007}, 
one finds that for some indices $l,m,n$ with $2\le l<m<n\le 22$, one has
$$U(\grE)\ll \iint_\grE g_1h_l^7h_m^7h_n^7\d\alp\d\bet .$$
Here, we have abbreviated $|g(\Lam_1)|$ to $g_1$. By relabelling variables if necessary, there is no loss of generality in
 supposing that $(l,m,n)=(2,3,4)$ and that $\Lam_3$ and $\Lam_4$ are each pairwise linearly independent of $\Lam_1$. 
Recall also that $\Lam_2,\Lam_3,\Lam_4$ are assumed to be pairwise linearly independent. Define
$$U_{ij}=\iint_\grE g_1^{9/4}h_i^4h_j^{12}\d\alp\d\bet \quad \text{and}\quad 
V_{ij}=\iint_\grE h_i^{12}h_j^{12}\d\alp\d\bet .$$
Then an application of H\"older's inequality yields the bound
\begin{equation}\label{6.7}
U(\grE)\ll \Bigl( \sup_{(\alp,\bet)\in \grE}|h(\Lam_2)h(\Lam_3)h(\Lam_4)|\Bigr)^{5/27}(U_{23}U_{24})^{2/9}
(V_{23}V_{24})^{17/81}V_{34}^{11/81}.
\end{equation}

\par Let $(i,j)$ be either $(2,3)$ or $(2,4)$. Recall that $\Lam_1=a_1\alp$, and change variables from $\bet$ to $\gam$ via
 the linear transformation $a_j\alp+b_j\bet=b_j\gam$. Note here that since $\Lam_1$ and $\Lam_j$ are inequivalent, then
 necessarily $b_j\ne 0$. Write $A=a_i-b_ia_j/b_j$ and recall (\ref{6.2}). Then in view of the definition of $\grE$, we may
 make use of the periodicity of the integrand to deduce that
$$U_{ij}\le \int_0^1\int_\grM |g(\Lam_1)|^{9/4}|h(A\alp+b_i\gam)|^4|h(b_j\gam)|^{12}\d\alp\d\gam \le 
W\sup_{\lam\in \dbR}J(\lam;A),$$
where
$$W=\int_0^1|h(b_j\gam)|^{12}\d\gam .$$
An application of Lemma \ref{lemma2.1} shows, via a change of variable, that $W=O(P^8)$, and so we deduce from Lemma
 \ref{lemma6.2} that $U_{23}U_{24}\ll (P^8)^2(P^{9/4})^2=P^{41/2}$. Since $\Lam_2$, $\Lam_3$, $\Lam_4$ are
 pairwise linearly independent, when $2\le i<j\le 4$ further changes of variable lead from Lemma \ref{lemma2.1} to the
 estimate
$$V_{ij}\ll \int_0^1\int_0^1|h(\tet_1)h(\tet_2)|^{12}\d\tet_1\d\tet_2=\Bigl( \int_0^1|h(\tet)|^{12}\d\tet\Bigr)^2
\ll (P^8)^2.$$
By substituting these estimates into (\ref{6.7}), we deduce thus far that
\begin{equation}\label{6.8}
U(\grE)\ll P^{14}\Bigl( P^{-3}\sup_{(\alp,\bet)\in \grE}|h(\Lam_2)h(\Lam_3)h(\Lam_4)|\Bigr)^{5/27}.
\end{equation}

\par Our final task is to bound the second factor on the right hand side of (\ref{6.8}). When $\tet$ is a real number with 
$|h(\tet)|\ge PQ^{-1/100}$, it follows from \cite[Lemma 2.1]{Woo2003} that there exist $a\in \dbZ$ and $q\in \dbN$ with
 $1\le q\le Q^{1/10}$, $(a,q)=1$ and $|q\tet -a|\le Q^{1/10}P^{-4}$. Consequently, if $\Lam_k$ and $\Lam_l$ are 
inequivalent linear forms, and $h_kh_l\ge P^2Q^{-1/100}$, then for $\sig=k,l$ there exist integers $d_\sig$ and $q_\sig$
 with
$$1\le q_\sig\le Q^{1/10},\quad (d_\sig,q_\sig)=1\quad \text{and}\quad |\Lam_\sig -d_\sig/q_\sig|
\le q_\sig^{-1}Q^{1/10}P^{-4}.$$
From here it follows as in the proof of \cite[Lemma 10]{BW2007} that $(\alp,\bet)\in \grN$. Thus
$$\sup_{(\alp,\bet)\in \grn}|h(\Lam_k)h(\Lam_l)|\ll P^2Q^{-1/100},$$
whence we obtain the estimate
$$\sup_{(\alp,\bet)\in \grE}|h(\Lam_2)h(\Lam_3)h(\Lam_4)|\ll P^3Q^{-1/100}.$$
On substituting into (\ref{6.8}), we conclude that $U(\grE)\ll P^{14}Q^{-1/600}$. The conclusion of the lemma therefore
 follows from (\ref{6.6}) on recalling that $\grn=\gre\cup \grE$.
\end{proof}

A trivial estimate for the generating function $h(\tet)$ now leads from the conclusion of Lemma \ref{lemma6.3} to the estimate
 (\ref{5.5}) by means of the relation
$$\iint_\grn g(\Lam_1)H(\alp,\bet)\d\alp\d\bet \ll P^{s-22}U(\grn).$$

\section{The major arcs analysis}
Not only is the analysis of the major arcs largely standard, but it is also very similar to the work in \cite[\S7]{BW2007}. A brief 
sketch of the analysis therefore suffices on the present occasion. We begin with some additional notation. Define
$$S(q,a)=\sum_{r=1}^qe(ar^4/q),\quad T(q,c,d)=q^{-s}\prod_{j=1}^sS(q,a_jc+b_jd),$$
$$\grA(q)=\underset{(a,b,q)=1}{\sum_{a=1}^q\sum_{b=1}^q}T(q,a,b)\quad \text{and}\quad \grS(X)=
\sum_{1\le q\le Q}\grA(q).$$
Also, with $\lam_j$ as shorthand for $a_j\xi+b_j\zet$, put
$$v(\tet)=\int_0^Pe(\tet \gam^4)\d\gam ,\quad w(\tet)=\int_{\del P}^Pe(\tet \gam^4)\d\gam ,\quad 
V(\xi,\zet)=w(a_1\xi)\prod_{j=2}^sv(\lam_j),$$
and writing $\calB(X)=[-XP^{-4},XP^{-4}]^2$, define
$$\grJ(X)=\iint_{\calB(X)}V(\xi,\zet)\d\xi\d\zet.$$

\par Recall that $\Lam_1=a_1\alp$. Then, as adapted to the current context, the argument leading to 
\cite[equation (7.5)]{BW2007} shows that there is a positive number $\rho$ having the property that whenever 
$(\alp,\bet)\in \grN(q,a,b)\subseteq \grN$, one has
$$g(\Lam_1)H(\alp,\bet)-\rho T(q,a,b)V(\alp-a/q,\bet-b/q)\ll P^s(\log P)^{-1/2}.$$
Integrating over $\grN$, we infer that
\begin{equation}\label{7.1}
\iint_\grN g(\Lam_1)H(\alp,\bet)\d\alp\d\bet =\rho \grS(Q)\grJ(Q)+O(P^{s-8}(\log P)^{-1/4}).
\end{equation}

\begin{lemma}\label{lemma7.1} Under the hypotheses of Theorem \ref{theorem1.1}, the limit
 $\grS=\underset{X\rightarrow \infty}\lim\grS(X)$ exists, and one has $\grS-\grS(X)\ll X^{-1}$. When the pair of equations 
(\ref{1.1}) has a non-singular $p$-adic solution for all primes $p$, moreover, one has $\grS\gg 1$.
\end{lemma}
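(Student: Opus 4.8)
The plan is to treat $\grS$ as a product of local densities, following the standard singular-series analysis for pairs of diagonal equations (as in \cite[\S7]{BW2007} and the treatments of Cook \cite{Coo1972} and Vaughan \cite{Vau1997}). First I would establish that $\grA(q)$ is a multiplicative function of $q$ — this follows from the twisted multiplicativity of the Gauss-type sums $S(q,a)$ via the Chinese Remainder Theorem, exactly as in the classical setup. Granting multiplicativity, the infinite sum $\sum_{q=1}^\infty \grA(q)$ factors formally as an Euler product $\prod_p \bigl(\sum_{l=0}^\infty \grA(p^l)\bigr)$, and each local factor equals the $p$-adic density $\sig_p$ of solutions to the system $(\ref{1.1})$ in the sense of a limit of $p^{-(s-2)k}$ times the number of solutions modulo $p^k$.

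The second step is convergence, with the quantitative tail bound $\grS - \grS(X) \ll X^{-1}$. For this I would invoke the standard estimate $|S(q,a)| \ll q^{3/4+\eps}$ for $(a,q)=1$ (Hua-type bound for quartic Gauss sums), which gives $|T(q,a,b)| \ll q^{-s}\prod_j (q/(q,a_jc+b_jd))^{1/4} \cdot (\text{stuff})$; the rank hypothesis of Theorem \ref{theorem1.1}, namely that at least $s-7$ of the $ca_j+db_j$ are non-zero for each primitive $(c,d)$, guarantees that at least $s-7 \ge 15$ of the summands $S(q,a_jc+b_jd)$ genuinely exhibit cancellation rather than being trivially of size $q$. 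Summing over the $\le q^2$ pairs $(a,b)$ one gets $\grA(q) \ll q^{2 - (s-7)/4 + \eps}$, and since $2 - 15/4 < -1$ the series converges absolutely with a power-saving tail; the truncation $q \le Q$ in $\grS(Q)$ versus the full sum costs only $O(Q^{-1})$. This is mostly bookkeeping once the rank condition is deployed correctly — the subtlety is making sure the bound on the incomplete product of Gauss sums uses precisely the $s-7$ non-vanishing coefficients and no more.

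The third and genuinely substantive step is the lower bound $\grS \gg 1$. Here I would argue that $\grS = \prod_p \sig_p$ where $\sig_p > 0$ for every $p$ precisely because $(\ref{1.1})$ has a non-singular $p$-adic solution: a non-singular zero mod $p$ (or mod $p^\tau$ for the finitely many bad primes, via Hensel's lemma applied to the $2\times 2$ Jacobian minor that is a $p$-adic unit) lifts to a positive-density set of solutions mod $p^k$ for all $k$, forcing $\sig_p \ge c_p > 0$. The difficulty is uniformity: one must show $\sig_p \ge 1 - O(p^{-5/4})$ — or some such summable deviation from $1$ — for all sufficiently large $p$, so that the Euler product does not diverge to $0$. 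This again leans on the rank hypothesis: for $p$ large, the Gauss sum bound gives $|\grA(p^l) - [l=0]| \ll p^{-l(s-7)/4 + \eps l}$ summably in $l$ and $p$, so $\sig_p = 1 + O(p^{2-(s-7)/4+\eps})$ and the tail of $\prod_p \sig_p$ converges to something bounded below; combined with $\sig_p > 0$ for the finitely many small primes one concludes $\grS \gg 1$. I expect the main obstacle to be organizing the large-prime estimate cleanly — verifying that the rank condition delivers exactly the exponent saving needed both for $\grS - \grS(X) \ll X^{-1}$ and for the convergence of $\prod_p \sig_p$ away from zero — rather than any conceptual novelty, since the structure mirrors \cite[\S7]{BW2007} closely.
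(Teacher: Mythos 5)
Your overall framework — multiplicativity of $\grA(q)$, the Euler product over $p$-adic densities, Hensel lifting from a non-singular $p$-adic zero, and a uniform large-prime bound — matches the structure of the paper's proof, which adapts \cite[Lemma~12]{BW2007}. The genuine gap is in the convergence step, and it is not a matter of bookkeeping.

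First, the claimed bound $\grA(q)\ll q^{2-(s-7)/4+\eps}$ does not follow from summing a uniform estimate for $T(q,c,d)$ over the $\le q^{2}$ primitive pairs $(c,d)$, because there is no uniform estimate: even when $ca_{j}+db_{j}\ne 0$, the gcd $u_{j}=(q,ca_{j}+db_{j})$ can equal $q$, in which case $S(q,ca_{j}+db_{j})=q$ exhibits no cancellation at all. Vaughan's \cite[Theorem~4.2]{Vau1997} gives $|S(q,a)|\ll q^{3/4+\eps}(q,a)^{1/4}$, so one has $T(q,c,d)\ll q^{-11/2+\eps}(u_{1}\cdots u_{22})^{1/4}$, and the real work is bounding the sum of $\prod_{j}u_{j}^{1/4}$ over $(c,d)$. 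That sum is not $O(q^{2})$ trivially; controlling it requires the multiplicity structure set up in \S5 — the $21$ forms $\Lam_{2},\dots,\Lam_{22}$ fall into $t$ equivalence classes of sizes $r_{1},\dots,r_{t}$ with $r_{l}\le 7$ and $\sum r_{l}=21$ — and leads, as in \cite[p.~890]{BW2007}, to a bound of divisor-sum type
$\grA(q)\ll q^{-7/2}\sum_{v_{1}\cdots v_{t}\mid\Del q}v_{1}^{(r_{1}-4)/4}\cdots v_{t}^{(r_{t}-4)/4}$,
whence $\grA(q)\ll q^{-11/4+\eps}$. Your proposal replaces this with an averaging you do not justify, and the $r_{l}\le 7$ constraint — which is what makes the exponents $(r_{l}-4)/4\le 3/4$ small enough for the divisor sum to be $O(q^{3/4+\eps})$ — never enters your argument at all.

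Second, even if your exponent $q^{2-(s-7)/4+\eps}$ were correct, it is too weak at the boundary case $s=22$: there it gives $\grA(q)\ll q^{-7/4+\eps}$, hence $\grS-\grS(X)\ll X^{-3/4+\eps}$, which falls short of the stated $\ll X^{-1}$. The paper's bound $\grA(q)\ll q^{-11/4+\eps}$ gives $X^{-7/4+\eps}$, comfortably within the target, and does so uniformly in $s\ge 22$ because it uses the $21$ prearranged forms and trivial bounds for the remaining indices rather than letting the saving degrade with $s$. So you need to replace the "sum over $q^{2}$ pairs" step by the multiplicity-aware gcd analysis; the Hensel/lower-bound part of your argument is then fine and matches the paper.
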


\begin{proof} This lemma is an adaptation of \cite[Lemma 12]{BW2007} to our needs. We establish the estimate 
$\grA(q)=O(q^{-2})$ to replace the cognate bound \cite[equation (7.14)]{BW2007}. Once this is confirmed, all the
 conclusions drawn in the lemma follow just as in the aforementioned work \cite{BW2007}, and thus we may omit the details. 
We immitate the argument on \cite[page 890]{BW2007} to establish the aforementioned bound. Note first that we may suppose
 that whenever $(c,d)\in \dbZ^2\setminus\{(0,0)\}$, then the linear form $cL_1(\bftet)+dL_2(\bftet)$ contains at least $s-7$
 non-zero coefficients. Let $u_j=(q,ca_j+db_j)$ and apply \cite[Theorem 4.2]{Vau1997} to infer that
$$T(q,c,d)\ll q^{-11/2}(u_1u_2\ldots u_{22})^{1/4}.$$
Here, we have used the prearrangement of indices and removed dependence on potential indices with $22<j\le s$ by the use 
of trivial estimates. Recall now the multiplicities $r_1,\ldots ,r_t$ associated to the equivalence classes of the forms $\Lam_j$.
 Following the analysis on \cite[page 890]{BW2007}, one finds that there is a natural number $\Del$ depending only on 
$a_j,b_j$ $(1\le j\le s)$ such that
$$\grA(q)\ll q^{-7/2}\sum_{\substack{v_1,\ldots ,v_t\\ v_1v_2\ldots v_t|\Del q}}v_1^{(r_1-4)/4}\ldots v_t^{(r_t-4)/4}.$$
The upper bound $r_l\le 7$ therefore leads to the estimate $\grA(q)\ll q^{\eps-11/4}$, so that in view of our earlier comments, the proof of the lemma is complete.
\end{proof}

\begin{lemma}\label{lemma7.2} Under the hypotheses of Theorem \ref{theorem1.1}, the limit 
$\grJ=\underset{X\rightarrow \infty }\lim \grJ(X)$ exists, and one has $\grJ-\grJ(X)\ll P^{s-8}X^{-1}$. When the pair of
 equations (\ref{1.1}) has a non-singular real solution, moreover, one has $\grJ\gg P^{s-8}$.
\end{lemma}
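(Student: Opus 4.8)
The plan is to analyse the singular integral $\grJ(X)$ by the now-standard route: first recognise $\grJ(X)$ as (essentially) a truncated Fourier integral counting real solutions of the system \eqref{1.1} in a box, then establish convergence as $X\to\infty$ together with the tail estimate, and finally extract positivity from the existence of a non-singular real point. The change of variables $\gam^4 = \beta$ expresses each $v(\tet)$ and $w(\tet)$ as an oscillatory integral of the shape $\int \beta^{-3/4}e(\tet\beta)\d\beta$ over a fixed interval, and the one-dimensional estimate $v(\tet)\ll P(1+P^4|\tet|)^{-1/4}$ (with the same bound for $w$) is classical. Since $s\ge 22$ and the prearranged indices guarantee that among $\lam_2,\dots,\lam_{22}$ each linear form has multiplicity at most $7$ — so at least three inequivalent forms occur — I would use the change of variables $(\xi,\zet)\mapsto(\lam_i,\lam_j)$ for two inequivalent forms among the first few indices to reduce a portion of the integrand to a genuine two-dimensional decay factor. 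This yields a bound $V(\xi,\zet)\ll P^s\prod_{j}(1+P^4|\lam_j|)^{-1/4}$ from which one reads off that $\grJ(X)$ converges absolutely, and that $\grJ-\grJ(X)\ll P^{s-8}X^{-1}$: the power $P^{s-8}$ comes from $s-2$ factors of $P$ together with the two-dimensional Jacobian $P^{-8}$, and the saving $X^{-1}$ from integrating the slowest-decaying variable beyond radius $XP^{-4}$.

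For the lower bound I would follow the Fourier-analytic interpretation. Writing $\grJ = \lim_{X\to\infty}\grJ(X)$, one has
$$
\grJ = \int_{\dbR^2} w(a_1\xi)\prod_{j=2}^s v(\lam_j)\,\d\xi\,\d\zet,
$$
and after the substitution $\gam_j = P\beta_j$ this becomes $P^{s-8}$ times the density
$$
\sigma_\infty = \int_{\dbR^2}\Bigl(\int_{[\del,1]}e(\xi u_1^4)\d u_1\Bigr)\prod_{j=2}^s\Bigl(\int_{[0,1]}e(\lam_j u_j^4)\d u_j\Bigr)\d\xi\,\d\zet,
$$
which by Fourier inversion measures the "volume" of real solutions of \eqref{1.1} in the truncated box. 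The standard device is to interpret $\sigma_\infty$ as $\lim_{\eps\to0}(2\eps)^{-2}\,\mathrm{vol}\{\bfu: |a\text{-form}|<\eps,\ |b\text{-form}|<\eps\}$; a non-singular real zero $\bftet\in(0,1)^s$ of \eqref{1.1}, whose existence was secured in \S5 with moreover $\tet_1>\del$ and the Jacobian in the variables $u_1,u_2$ nonzero, allows an application of the implicit function theorem on a small neighbourhood of $\bftet$ to show this volume is $\gg\eps^2$. Hence $\sigma_\infty>0$ and $\grJ\gg P^{s-8}$.

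The main obstacle is the convergence/tail estimate rather than the positivity: because only the first $21$ forms are controlled in multiplicity (each $\le 7$) and the remaining forms contribute merely trivial factors, one must be careful that after the two-dimensional change of variables enough of the $(1+P^4|\lam_j|)^{-1/4}$ factors genuinely survive to force absolute convergence — a single pair of inequivalent forms gives decay like $|\xi|^{-1/4}|\zet|^{-1/4}$, which is not integrable at infinity, so one needs at least a mild surplus, supplied here by having three or more pairwise inequivalent forms among the first $22$ indices. Once absolute convergence is in hand, the dominated-convergence argument for the existence of the limit and the estimate $\grJ-\grJ(X)\ll P^{s-8}X^{-1}$ are routine, and run exactly parallel to \cite[Lemma 13]{BW2007}; I would simply indicate the points where the quartic exponent and the rank hypothesis of Theorem \ref{theorem1.1} enter, and refer to the cited source for the remaining details.
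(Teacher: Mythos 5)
Your proposal follows essentially the same route as the paper: you use the classical decay $v(\tet),w(\tet)\ll P(1+P^4|\tet|)^{-1/4}$, exploit the multiplicity-at-most-$7$ arrangement (giving at least three pairwise inequivalent forms among $\Lam_2,\ldots,\Lam_{22}$) to obtain absolute convergence with the tail bound $\grJ-\grJ(X)\ll P^{s-8}X^{-1}$, deduce positivity from the non-singular real zero by rescaling and a local analysis, and defer the remaining bookkeeping to \cite[Lemma 13]{BW2007}, exactly as the paper does. The only blemish is the parenthetical ``$s-2$ factors of $P$ together with the Jacobian $P^{-8}$'': the correct count is $s$ factors of $P$ (one per generating function) against $P^{-8}$ from the $\d\xi\,\d\zet$ integration, which already yields $P^{s-8}$.
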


\begin{proof} Write $\calBhat(X)$ for $\dbR^2\setminus \calB(X)$, and recall the prearrangement of indices introduced in \S5.
 Then a direct modification of the argument on \cite[page 891]{BW2007} confirms that, for a suitable positive number 
$\Tet=\Tet(\bfa,\bfb)$, one has
$$\underset{\calBhat(X)}\iint |v(\lam_2)\ldots v(\lam_{22})|\d\xi \d\zet \ll P^{21}\underset{\calBhat(\Tet X)}\iint 
(1+P^4|\xi|)^{-21/8}(1+P^4|\zet|)^{-21/8}\d\xi\d\zet .$$
By applying trivial bounds for $w(a_1\xi)$ and the additional factors $v(\lam_j)$ for $j>22$, we therefore conclude that
$$\iint_{\dbR^2\setminus \calB(X)}V(\xi,\zet)\d\xi \d\zet \ll P^{s-21}(P^{13}X^{-1})\ll P^{s-8}X^{-1}.$$
This bound replaces \cite[equation (7.18)]{BW2007}, and the lemma now follows in the same manner as Lemma 13 was 
proved in \cite{BW2007}.
\end{proof}

The conclusions of Lemmata \ref{lemma7.1} and \ref{lemma7.2} now combine with the asymptotic formula (\ref{7.1}) to deliver the relation
$$\iint_\grN g(\Lam_1)H(\alp,\bet)\d\alp\d\bet =\rho \grS \grJ +O(P^{s-8}Q^{-1})\gg P^{s-8},$$
thereby confirming the lower bound (\ref{5.4}). In view of the discussion concluding \S5, the lower bound $\calN(P)\gg P^{s-8}$ that establishes Theorem \ref{theorem1.1} now follows.

\bibliographystyle{amsbracket}
\providecommand{\bysame}{\leavevmode\hbox to3em{\hrulefill}\thinspace}

\end{document}